\renewcommand{\email}[2][]{%
  \ifx\emails\@empty\relax\else{\g@addto@macro\emails{,\space}}\fi%
  \@ifnotempty{#1}{\g@addto@macro\emails{\textrm{(#1)}\space}}%
  \g@addto@macro\emails{#2}%
}
\newcommand{\stt}{\mathfrak{s}_{T, \mathfrak{t}}}
\newcommand{\sttt}{\mathfrak{s}_{[T\cup-T], \mathfrak{h}}}
\newcommand{\svast}{\bBigg@{3}}
\newcommand{\vast}{\bBigg@{4}}
\newcommand{\Vast}{\bBigg@{5}}
\newcommand{\wc}{[ T \cup -T]}
\newcommand{\cw}{[T \cup -T ]}
 \newcommand{\ad}{\text{ad}}
  \theoremstyle{definition}
  \newtheorem{definition}{Definition}[section]
   \theoremstyle{plain}
  \newtheorem{lemma}[definition]{Lemma}
  \newtheorem{proposition}[definition]{Proposition}
  \newtheorem{theorem}[definition]{Theorem}
  \newtheorem{corollary}[definition]{Corollary}
    \theoremstyle{definition}
\newcommand\End{\text{End}}
\title[Narrow, wide, and $\lambda$-wide regular subalgebras]{Narrow, wide, and $\lambda$-wide regular  subalgebras of semisimple Lie algebras}
\begin{document}

\author[Andrew Douglas]{Andrew Douglas$^{1,2}$}
\address[]{$^1$Department of Mathematics, New York City College of Technology, City University of New York, Brooklyn, NY, USA.}
\address[]{$^2$Ph.D. Programs in Mathematics and Physics, CUNY Graduate Center, City University of New York, New York, NY, USA.}

\author[Joe Repka]{Joe Repka$^3$}
\address{$^3$Department of Mathematics, University of Toronto, Toronto, ON,  Canada.}

\date{\today}

\keywords{Narrow subalgebras, wide subalgebras, $\lambda$-narrow subalgebras, $\lambda$-wide subalgebras, regular subalgebras, root systems, closed subsets of root systems.} 
\subjclass[2010]{17B05, 17B10, 17B20, 17B22,  17B30}

\begin{abstract}
A subalgebra  of a semisimple Lie algebra  is {\it wide} if every simple module of the semisimple Lie algebra remains indecomposable when restricted to the subalgebra. From a finer viewpoint, a subalgebra  is  $\mathit{\lambda}${\it -wide} if the simple module  of a semisimple Lie algebra of highest weight $\lambda$ remains indecomposable when restricted to the subalgebra. 
A subalgebra  is {\it narrow} if the restriction of all non-trivial simple modules  to the subalgebra have 
proper 
 decompositions.
We determine necessary and sufficient conditions for  regular  subalgebras  of  semisimple Lie algebras to be $\lambda$-wide.  As a natural consequence, we  establish necessary and sufficient conditions for regular subalgebras to be wide, a result which has already been established by Panyushev  for essentially all regular solvable subalgebras \cite{panyu}.
Next, we show that establishing whether or not a regular subalgebra  of a simple Lie algebra is wide does not require consideration of all simple modules. It is necessary and sufficient to only consider the adjoint representation.
Finally, we show that a regular subalgebra of the special linear algebra $\mathfrak{sl}_{n+1}$  is either  narrow or wide; this property  does not hold for non-regular subalgebras of $\mathfrak{sl}_{n+1}$.
\end{abstract}

\maketitle

\section{Introduction}\label{intro}

A subalgebra  of a semisimple Lie algebra  is {\it wide} if every simple module of the semisimple Lie algebra remains indecomposable when restricted to the subalgebra. 
Notable research has examined wide subalgebras of semisimple Lie algebras.  In \cite{dp},  for instance, Douglas and Premat  showed that a regular subalgebra, isomorphic to the Euclidean algebra $\mathfrak{e}(2)$, is wide in the special linear  algebra $\mathfrak{sl}_3$. 

Douglas, Repka, and Joseph  \cite{dwr2} showed  that all subalgebras isomorphic to the  solvable, $4$-dimensional Diamond Lie algebra are wide in both 
$\mathfrak{sl}_3$, and  the symplectic  algebra $\mathfrak{sp}_4$. 
Douglas and Repka showed that all subalgebras of $\mathfrak{sp}_4$ isomorphic to the Euclidean algebra $\mathfrak{e}(3)$ are wide \cite{drconf}. 

Casati proved that certain $n$-dimensional abelian subalgebras of  $\mathfrak{sl}_{n+1}$ are wide \cite{casati}, and that the Levi decomposable algebra $\mathfrak{sl}_{n+1}  \inplus \mathbb{C}^{n+1}$ is wide in $\mathfrak{sl}_{n+2}$ \cite{casatib}. More generally, Panyushev \cite{panyu}--who seems to have coined the term ``wide"--established significant and beautiful results including describing 
necessary and sufficient conditions for essentially all  regular solvable subalgebras of a semisimple Lie algebra to be wide.

In this article, we first shift our focus from wide subalgebas to   $\lambda$-wide subalgebras.  A subalgebra  is  $\mathit{\lambda}${\it -wide} if the simple module  of a semisimple Lie algebra of highest weight $\lambda$ remains indecomposable when restricted to the subalgebra.
In particular, we determine necessary 
and sufficient conditions for  regular  subalgebras  of a semisimple Lie algebra to be $\lambda$-wide.  
As a consequence, we also derive necessary and sufficient conditions for regular subalgebras to be wide, a result which has already been established by Panyushev for  essentially all regular solvable subalgebras \cite{panyu}.   
Next, we show that establishing whether or not a regular subalgebra  of a simple Lie algebra is wide does not require consideration of all simple modules.  It is sufficient to consider only the adjoint representation.

Finally we examine narrow subalgebras. That is, a subalgebra whose restriction  to any non-trivial simple module of the semisimple Lie algebra has a proper decomposition.
A non-regular subalgebra of $\mathfrak{sl}_{n+1}$  may be neither narrow nor wide. Indeed, in  \cite{dr1}, Douglas and Repka showed that  $\mathfrak{e}(3)$, as a non-regular subalgebra, 
is neither narrow nor wide  in $\mathfrak{sl}_4$; When restricted to this non-regular subalgebra, some simple modules remain indecomposable, and others do not. 

We show that a regular subalgebra of $\mathfrak{sl}_{n+1}$, however, must be either narrow or wide. That is, either all non-trivial 
simple modules of $\mathfrak{sl}_{n+1}$  have  a proper decomposition when restricted to the regular subalgebra, or all simple modules remain indecomposable upon restriction to the regular subalgebra. 
We use  bases of simple $\mathfrak{sl}_{n+1}$-modules created by Feigin,  Fourier, and Littelmann \cite{feigin}.

The results of this article include regular solvable, semisimple, and Levi decomposable subalgebras. 
 Hence, all regular subalgebras are included  (Levi's Theorem [\cite{levi}, Chapter II, Section $2$]). 

The article is organized as follows. Section \ref{regular} contains necessary background information, and establishes notation. In Section \ref{lwidesec}, wide, and $\lambda$-wide subalgebras are examined. 
In  Section \ref{narrowwidesection}, we prove that regular subalgebras of $\mathfrak{sl}_{n+1}$ are either narrow or wide. 

Note that all Lie algebras and modules in this article are over the complex numbers, and  finite-dimensional.

\section{Background and notation}\label{regular}

In this section, we review relevant background on semsimple Lie algebras, their modules, closed subsets of root systems, and regular subalgebras of semisimple Lie algebras. We begin with semisimple Lie algebras and their modules, which largely follows \cite{humphreys}. 

\subsection{Semisimple Lie algebras and their modules}

Throughout this paper,  $\mathfrak{g}$ will denote a semisimple Lie algebra, $\mathfrak{h}$ a fixed Cartan subalgebra of $\mathfrak{g}$, with corresponding root system $\Phi$, and Weyl group $\mathcal{W}$. For $\alpha \in \Phi$,  we denote $\mathfrak{g}_\alpha$  the corresponding root space.
The set of positive roots of $\Phi$ is denoted $\Phi^+$, and $\Delta =\{ \alpha_1,...,\alpha_n\}\subseteq\Phi^+$ is a base of $\Phi$. The {\it rank} of $\mathfrak{g}$ is $n$,  the number of simple roots in $\Delta$.
We denote the root lattice by $\mathcal{Q} =\oplus_{i=1}^n \mathbb{Z} \alpha_i$. 

We may naturally associate $\mathfrak{h}$ with its dual space $\mathfrak{h}^*$ via the Killing form $\kappa$. Specifically,  $\alpha \in \mathfrak{h}^*$ 
corresponds to the unique element $t_\alpha \in \mathfrak{h}$ such that $\alpha(h) = \kappa(t_\alpha, h)$, for all $h \in \mathfrak{h}$. We have the nondegenerate symmetric bilinear form  on $\mathfrak{h}^*$ given by
$(\alpha, \beta) \coloneqq \kappa(t_{\alpha}, t_{\beta} )$, and we may define  $\langle \alpha, \beta \rangle \coloneqq \frac{2(\alpha, \beta)}{(\beta, \beta)} =\alpha (h_{\beta})$, where 
$h_{\beta} \coloneqq \frac{2t_\beta}{\kappa(t_\beta, t_\beta)}=\frac{2t_\beta}{(\beta, \beta)}$.

 If $\alpha \in \Phi$, we may select a  nonzero $e_\alpha \in \mathfrak{g}_\alpha$, for which there 
 is a unique $e_{-\alpha} \in \mathfrak{g}_{-\alpha}$, such that $e_\alpha$, $e_{-\alpha}$, and $h_{\alpha} =[e_\alpha, e_{-\alpha}] \in \mathfrak{h}$ span a subalgebra 
isomorphic to $\mathfrak{sl}_2$, with commutation relations  $[h_\alpha, e_\alpha]=\alpha(h_\alpha) e_\alpha=2 e_\alpha$, and $[h_\alpha, e_{-\alpha}]=-2 e_{-\alpha}$. 
We refer to 
 $e_\alpha$,    $e_{-\alpha}$, and $h_{\alpha}$ as the $\mathfrak{sl}_2$-triple determined by $e_\alpha$.  Note that $h_{-\alpha} = - h_\alpha$.

The set of weights relative to the root system $\Phi$ is denoted $\Lambda$,  and $\Lambda^+$ is the set of all dominant weights with respect to $\Delta$. 
Let $\lambda_1,..., \lambda_n$ be the dual
basis to $\frac{2\alpha_i}{ (\alpha_i, \alpha_i)}$, where $ \alpha_i \in\Delta$ for all $i$. That is, $\frac{2(\lambda_i, \alpha_j)}{ (\alpha_j, \alpha_j)}=\delta_{ij}$. Each of $\lambda_1,..., \lambda_n$ is a dominant  weight, collectively called the {\it fundamental dominant weights} (relative to $\Delta$).

A dominant  weight $\lambda \in \Lambda^+$ may be written as $\lambda = m_1 \lambda_1+ \cdots +m_n \lambda_n$, where $m_i$ is a nonnegative integer for each $i$. For each dominant weight
$\lambda$, $V(\lambda)$ is  the simple $\mathfrak{g}$-module of highest weight $\lambda$. Fix a highest weight vector $v_\lambda \in V(\lambda)$, unique up to a scalar multiple.

For an arbitrary $\mathfrak{g}$-module $V$, let $\Pi(V)$ be the set of weights of $V$. Then, $V$
decomposes into weight spaces
\begin{equation}
 V = \bigoplus_{\mu \in \Pi(V) } V_\mu,
\end{equation}
where $V_\mu =\{ v\in V ~|~ h \cdot v = \mu(h) v, ~ \text{for all}~ h\in \mathfrak{h} \}$. Further, 
$\langle \mu, \alpha \rangle= \mu (h_{\alpha})\in \mathbb{Z}$, for each $\alpha \in \Phi$.
If $\mathfrak{s}$ is a subalgebra of $\mathfrak{g}$, define
\begin{equation}
V^{\mathfrak{s}} \coloneqq \{v\in V ~ | ~   \mathfrak{s} \cdot v =0\}.
\end{equation}

\subsection{Closed subsets of root systems and regular subalgebras}

A subset $T$ of the root system $\Phi$ is {\it closed} if for any
$x, y \in T$, $x+y \in \Phi$ implies $x+y \in T$. 
Any closed set $T$ is a disjoint union of its {\it symmetric} component $T^r =\{\alpha \in T | -\alpha \in T  \}$, 
and its {\it special} component $T^u=\{ \alpha \in T | -\alpha  \notin T \}$.

Closed subsets $T$ and $T'$ of a root system $\Phi$ are {\it conjugate} if there exists an element $w\in \mathcal{W}$ such that
$w(T)=T'$. The following lemma is well-known. 
\begin{lemma}\label{lem:deccl}\cite{sopkina}
$T^r$ is a closed root subsystem of $\Phi$. For any two roots $\alpha \in T^u$ and $\beta \in T$ such that $\alpha+\beta$ is a root, we have that  $\alpha+\beta \in T^u$. In particular, $T^u$ is closed.
\end{lemma}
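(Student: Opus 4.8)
The plan is to read all three assertions off the single closedness axiom, treating $T^r$ first, then the absorption property of $T^u$, then closedness of $T^u$ as an immediate corollary. For $T^r$: symmetry ($-T^r = T^r$) is trivial, since the condition ``$\alpha \in T$ and $-\alpha \in T$'' is visibly unchanged under $\alpha \mapsto -\alpha$. For additive closedness, take $\alpha, \beta \in T^r$ with $\alpha + \beta \in \Phi$; since $\alpha, \beta \in T$ and $T$ is closed, $\alpha + \beta \in T$, and since $-\alpha, -\beta \in T$ with $(-\alpha) + (-\beta) = -(\alpha+\beta) \in \Phi$, closedness of $T$ also gives $-(\alpha+\beta) \in T$, whence $\alpha+\beta \in T^r$. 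A symmetric, additively closed subset of a root system is itself a root system in the subspace it spans (a standard fact, cf.\ \cite{sopkina}), so $T^r$ is a closed root subsystem of $\Phi$.

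For the second assertion, let $\alpha \in T^u$, $\beta \in T$, and set $\gamma = \alpha + \beta \in \Phi$. Closedness of $T$ gives $\gamma \in T$, so it remains only to show $-\gamma \notin T$. Arguing by contradiction, suppose $-\gamma \in T$. Then $-\gamma \in T$ and $\beta \in T$ with $(-\gamma) + \beta = -\alpha \in \Phi$, so closedness of $T$ forces $-\alpha \in T$; together with $\alpha \in T$ this puts $\alpha$ in $T^r$, contradicting $\alpha \in T^u$. Hence $-\gamma \notin T$, i.e.\ $\gamma = \alpha + \beta \in T^u$. Closedness of $T^u$ is then immediate: if $\alpha, \beta \in T^u \subseteq T$ with $\alpha + \beta \in \Phi$, the case just proved (applied with this $\alpha \in T^u$ and this $\beta \in T$) gives $\alpha + \beta \in T^u$.

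The argument is elementary, and I anticipate no real obstacle: it is entirely bookkeeping with the closedness axiom. The one step worth isolating is the reduction in the second assertion, where adding $\beta$ back onto $-(\alpha+\beta)$ recovers $-\alpha$ and thereby ties membership of $\alpha+\beta$ in $T^u$ to membership of $\alpha$ in $T^u$. The only point needing a little care is the intended reading of ``closed root subsystem'': I take it to mean precisely a symmetric, additively closed subset of $\Phi$, so that the root-system structure on the subspace spanned by $T^r$ requires no verification beyond the cited standard fact.
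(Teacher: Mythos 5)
Your proof is correct, and the argument is the standard elementary verification; the paper itself offers no proof of this lemma, merely citing \cite{sopkina}, so there is nothing to compare against. The one point you rightly isolate --- recovering $-\alpha = -(\alpha+\beta)+\beta$ to tie $\alpha+\beta \in T^u$ back to $\alpha \in T^u$ --- is exactly the key step, and your reading of ``closed root subsystem'' as a symmetric additively closed subset matches the paper's usage.
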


Let $G$ be the adjoint group of $\mathfrak{g}$. That is, $G$ is the connected 
algebraic subgroup of $\mathrm{GL}(\mathfrak{g})$ with Lie algebra $\ad \mathfrak{g}$.
The Weyl group $\mathcal{W}$ of $\Phi$ naturally acts on the dual space $\mathfrak{h}^*$. Further, by identifying  $\mathfrak{h}$ and $\mathfrak{h}^*$ via the Killing form, the Weyl group
also acts on $\mathfrak{h}$.

Let $T\subseteq \Phi$ be a closed subset. Let $\mathfrak{t}$ be a subspace of $\mathfrak{h}$
containing $[\mathfrak{g}_\alpha, \mathfrak{g}_{-\alpha}]$ for each $\alpha \in T^r$. Then
\begin{equation}\label{reggg}
\mathfrak{s}_{T,\mathfrak{t}} = \mathfrak{t} \oplus \bigoplus_{\alpha \in T} \mathfrak{g}_\alpha  
\end{equation}
is a regular subalgebra of $\mathfrak{g}$.  Moreover, all regular subalgebras of $\mathfrak{g}$ normalized by $\mathfrak{h}$ arise in this manner. 
Further, any regular subalgebra of $\mathfrak{g}$ is conjugate under $G$ to a regular subalgebra normalized by $\mathfrak{h}$. Hence, 
we'll assume that any regular subalgebra is normalized by $\mathfrak{h}$, and thus in the form of Eq. \eqref{reggg}.

\begin{proposition}\label{wprop}[ \cite{dougdeg}, Proposition 5.1]
The regular subalgebras $\mathfrak{s}_{T_1, \mathfrak{t}_1}$ and  $\mathfrak{s}_{T_2, \mathfrak{t}_2}$ are conjugate under $G$ if and only if there is a $w \in \mathcal{W}$ with 
$w(T_1)=w(T_2)$ and $w( \mathfrak{t}_1)=w( \mathfrak{t}_2)$.
\end{proposition}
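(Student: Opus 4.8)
The plan is to show that the conjugating element of $G$ can always be chosen to normalize $\mathfrak{h}$, after which the statement follows by inspecting root spaces. The ``if'' direction is immediate: identifying $\mathcal{W}$ with $N_G(\mathfrak{h})/Z_G(\mathfrak{h})$ and using that, since $G$ is the adjoint group, $Z_G(\mathfrak{h})$ is the torus with Lie algebra $\mathfrak{h}$ (which acts trivially on $\mathfrak{h}$ and preserves every root space), a given $w \in \mathcal{W}$ has a representative $g \in N_G(\mathfrak{h})$ acting on $\mathfrak{h}$ as $w$ with $g \cdot \mathfrak{g}_\alpha = \mathfrak{g}_{w(\alpha)}$ for all $\alpha \in \Phi$. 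If $w(T_1) = T_2$ and $w(\mathfrak{t}_1) = \mathfrak{t}_2$, then
\[
g \cdot \mathfrak{s}_{T_1,\mathfrak{t}_1} \;=\; w(\mathfrak{t}_1) \oplus \bigoplus_{\alpha \in T_1} \mathfrak{g}_{w(\alpha)} \;=\; \mathfrak{t}_2 \oplus \bigoplus_{\beta \in T_2} \mathfrak{g}_\beta \;=\; \mathfrak{s}_{T_2,\mathfrak{t}_2},
\]
so the two subalgebras are conjugate under $G$.

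For the ``only if'' direction, suppose $g \in G$ with $g \cdot \mathfrak{s}_1 = \mathfrak{s}_2$, where $\mathfrak{s}_i := \mathfrak{s}_{T_i,\mathfrak{t}_i}$, and set $\mathfrak{n}_i := N_{\mathfrak{g}}(\mathfrak{s}_i)$. Since $\mathrm{Ad}(g)$ is an automorphism of $\mathfrak{g}$, it carries normalizers to normalizers, so $g \cdot \mathfrak{n}_1 = \mathfrak{n}_2$. The key observation is that $\mathfrak{h}$ is a Cartan subalgebra of each $\mathfrak{n}_i$: it lies in $\mathfrak{n}_i$ because it normalizes the regular subalgebra $\mathfrak{s}_{T_i,\mathfrak{t}_i}$, and it is nilpotent and self-normalizing in $\mathfrak{n}_i$ because $N_{\mathfrak{n}_i}(\mathfrak{h}) = \mathfrak{n}_i \cap N_{\mathfrak{g}}(\mathfrak{h}) = \mathfrak{n}_i \cap \mathfrak{h} = \mathfrak{h}$. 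Hence $g \cdot \mathfrak{h}$, being the image of the Cartan subalgebra $\mathfrak{h}$ under an automorphism of $\mathfrak{g}$ and contained in $g \cdot \mathfrak{n}_1 = \mathfrak{n}_2$, is also a Cartan subalgebra of $\mathfrak{n}_2$.

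Now $\mathfrak{h}$ and $g \cdot \mathfrak{h}$ are two Cartan subalgebras of the \emph{same} Lie algebra $\mathfrak{n}_2$, and I would match them by automorphisms coming from $G$-elements that preserve $\mathfrak{s}_2$. Concretely, $\mathfrak{n}_2 = \mathfrak{h} \oplus \bigoplus_{\alpha \in S}\mathfrak{g}_\alpha$ for a closed set $S \supseteq T_2$, with nilradical $\bigoplus_{\alpha \in S^u}\mathfrak{g}_\alpha$ and reductive part $\mathfrak{h} \oplus \bigoplus_{\alpha \in S^r}\mathfrak{g}_\alpha$; any Cartan subalgebra of $\mathfrak{n}_2$ can be pushed into the reductive part by some $\exp(\ad x)$ with $x \in \bigoplus_{\alpha \in S^u}\mathfrak{g}_\alpha$, and then matched with $\mathfrak{h}$ inside the reductive part by root-group elements. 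Each such $x$ is $\ad_{\mathfrak{g}}$-nilpotent, so $\exp(\ad x) \in G$; and since $x \in \mathfrak{n}_2 = N_{\mathfrak{g}}(\mathfrak{s}_2)$, the automorphism $\exp(\ad x)$ preserves $\mathfrak{s}_2$. So there is $n \in N_G(\mathfrak{s}_2)$ with $n \cdot (g \cdot \mathfrak{h}) = \mathfrak{h}$; then $g' := ng$ satisfies $g' \cdot \mathfrak{h} = \mathfrak{h}$ and $g' \cdot \mathfrak{s}_1 = n \cdot \mathfrak{s}_2 = \mathfrak{s}_2$. Therefore $g' \in N_G(\mathfrak{h})$ induces some $w \in \mathcal{W}$ with $g' \cdot \mathfrak{g}_\alpha = \mathfrak{g}_{w(\alpha)}$, whence $\mathfrak{s}_2 = g' \cdot \mathfrak{s}_1 = w(\mathfrak{t}_1) \oplus \bigoplus_{\beta \in w(T_1)}\mathfrak{g}_\beta$. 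Comparing this with $\mathfrak{s}_2 = \mathfrak{t}_2 \oplus \bigoplus_{\beta \in T_2}\mathfrak{g}_\beta$, and using that $T = \{\alpha \in \Phi : \mathfrak{g}_\alpha \subseteq \mathfrak{s}_{T,\mathfrak{t}}\}$ and $\mathfrak{t} = \mathfrak{s}_{T,\mathfrak{t}} \cap \mathfrak{h}$ are intrinsic to a regular subalgebra, we conclude $w(T_1) = T_2$ and $w(\mathfrak{t}_1) = \mathfrak{t}_2$.

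The delicate point is exactly this last conjugacy: one must know that $\mathfrak{h}$ and $g \cdot \mathfrak{h}$ can be matched by an element of $G$ preserving $\mathfrak{s}_2$ --- not merely by an abstract automorphism of $\mathfrak{n}_2$, nor merely by a $G$-element preserving $\mathfrak{n}_2$. Everything hinges on arranging the elementary automorphisms that effect the Cartan conjugacy in $\mathfrak{n}_2$ to come from $\ad_{\mathfrak{g}}$-nilpotent elements of $\mathfrak{n}_2$; once this is in place --- or, more slickly, once one notes that $N_G(\mathfrak{s}_2)$ is an algebraic group with Lie algebra $\mathfrak{n}_2$ into which nilpotent elements of $\mathfrak{n}_2$ exponentiate, so that Chevalley's conjugacy theorem may be applied inside $N_G(\mathfrak{s}_2)$ --- the remaining steps are routine bookkeeping with root-space decompositions.
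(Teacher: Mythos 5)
The paper does not prove this proposition at all: it is imported verbatim from [\cite{dougdeg}, Proposition~5.1], so there is no in-text argument to compare against. Your proof is correct and is essentially the standard normalizer argument that the cited source uses. Two remarks. First, you have (rightly) read the statement as $w(T_1)=T_2$ and $w(\mathfrak{t}_1)=\mathfrak{t}_2$; as printed, ``$w(T_1)=w(T_2)$'' is a typo, since that condition would just say $T_1=T_2$. Second, the one genuinely delicate step is the one you yourself flag: conjugating $g\cdot\mathfrak{h}$ back to $\mathfrak{h}$ by an element that preserves $\mathfrak{s}_2$ and lies in $G$. Your reduction of this to the conjugacy of Cartan subalgebras of $\mathfrak{n}_2=N_{\mathfrak{g}}(\mathfrak{s}_2)$ is sound, because $\mathfrak{n}_2$ is $\mathfrak{h}$-stable and hence of the form $\mathfrak{h}\oplus\bigoplus_{\alpha\in S}\mathfrak{g}_\alpha$ with $S$ closed, $\mathfrak{h}$ is self-normalizing in it, and the conjugacy theorem (Humphreys, \S16) realizes the conjugation by products of $\exp(\operatorname{ad}x)$ with $x$ strongly ad-nilpotent in $\mathfrak{n}_2$; such $x$ are automatically ad-nilpotent in $\mathfrak{g}$ (they shift generalized eigenspaces of some $\operatorname{ad}y$ by a nonzero amount), so these exponentials lie in $G$ and preserve $\mathfrak{s}_2$ since $x$ normalizes it. Your alternative phrasing via Chevalley conjugacy inside the algebraic group $N_G(\mathfrak{s}_2)$, whose Lie algebra is $\mathfrak{n}_2$ in characteristic zero, is equally valid and is closer to how the result is usually packaged. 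The final bookkeeping (intrinsic recovery of $T$ and $\mathfrak{t}$ from the $\mathfrak{h}$-weight decomposition of the subalgebra, and the fact that $N_G(\mathfrak{h})/Z_G(\mathfrak{h})\cong\mathcal{W}$ with representatives permuting root spaces) is correct. In short: a complete proof of a statement the paper only quotes.
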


Let $S \subseteq \Phi$. The {\it closure} of $S$, denoted $[S]$, is the smallest closed subset of $\Phi$ containing $S$. 
The following result is straightforward to prove.
\begin{lemma}\label{semireg}
Let $T$ be a closed subset of $\Phi$. Then, $[T\cup -T]$ is a symmetric closed subset of $\Phi$ containing $T$. 
\end{lemma}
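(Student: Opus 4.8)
The plan is to verify the three assertions --- that $[T\cup -T]$ is symmetric, that it is closed, and that it contains $T$ --- in an order that exploits what we already know. Containment of $T$ is immediate: $[T\cup -T]$ contains $T\cup -T$ by definition of closure, hence contains $T$. Closedness is also immediate, since $[S]$ is by definition the smallest \emph{closed} subset containing $S$. So the only real content is symmetry, i.e.\ that $\alpha\in[T\cup -T]$ implies $-\alpha\in[T\cup -T]$.

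For symmetry, first I would observe that the set $T\cup -T$ is itself symmetric. Then I would argue that the closure operation preserves symmetry. The cleanest way is this: let $S$ be any symmetric subset of $\Phi$ and consider $-[S] \coloneqq \{-\alpha : \alpha\in[S]\}$. Since $x\mapsto -x$ is an involution of $\Phi$ that preserves sums (if $x+y\in\Phi$ then $(-x)+(-y)=-(x+y)\in\Phi$), the image $-[S]$ is again a closed subset of $\Phi$, and it contains $-S=S$. By minimality of the closure, $[S]\subseteq -[S]$; applying $-$ to both sides gives $-[S]\subseteq[S]$, hence $[S]=-[S]$, which is exactly the statement that $[S]$ is symmetric. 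Taking $S=T\cup -T$ finishes the proof.

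There is essentially no obstacle here; the lemma is flagged in the text as ``straightforward to prove,'' and the only thing to be a little careful about is not to conflate the closure $[\,\cdot\,]$ with the ``symmetric component'' $T^r$ from Lemma \ref{lem:deccl} --- we genuinely need the \emph{closure} of $T\cup -T$, which may be strictly larger than $T\cup -T$ itself, and the argument above handles that automatically. If one preferred a more hands-on alternative, one could instead characterize $[S]$ concretely as the set of all roots expressible as $\gamma_1+\dots+\gamma_k$ with each partial sum $\gamma_1+\dots+\gamma_j$ lying in $\Phi$ and each $\gamma_i\in S$, and then negate such an expression termwise; but the minimality argument is shorter and avoids verifying that this concrete description really equals $[S]$.
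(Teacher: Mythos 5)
Your proof is correct. The paper itself offers no argument for this lemma --- it is stated with the remark that it is ``straightforward to prove'' --- so there is no authorial proof to compare against; your write-up simply supplies the omitted details. The only nontrivial point, symmetry of the closure, is handled cleanly: since $x\mapsto -x$ is an automorphism of $\Phi$ compatible with addition, $-[S]$ is closed and contains $S$ whenever $S$ is symmetric, so minimality forces $[S]\subseteq -[S]$ and hence $[S]=-[S]$. This is exactly the kind of two-line argument the authors presumably had in mind, and your caution about not conflating the closure $[T\cup -T]$ with the symmetric component $T^r$ is well placed.
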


A regular solvable subalgebra is given by a subalgebra  of the form $\mathfrak{s}_{T,\mathfrak{t}}$, where $T$ is a special closed subset of $\Phi$ (possibly empty), and $\mathfrak{t}$ a subalgebra of $\mathfrak{h}$. 
A regular semisimple subalgebra is given by $\mathfrak{s}_{T, \mathfrak{t}}$ for some   symmetric closed subset $T$ (non-empty)  of $\Phi$, and subalgebra $\mathfrak{t}$ of $\mathfrak{h}$ generated by $[\mathfrak{g}_\alpha, \mathfrak{g}_{-\alpha}]$ for all $\alpha \in T\cap \Phi^+$. A regular Levi decomposable subalgebra is a non-semisimple regular Lie algebra $\mathfrak{s}_{T, \mathfrak{t}}$, 
such that $T$ is a closed subset, $T^r$ is the corresponding (non-empty) symmetric closed subset, and $T^u$ is the corresponding  special closed subset of $\Phi$, and  $\mathfrak{t}$ is  a subalgebra of $\mathfrak{h}$ containing $[\mathfrak{g}_\alpha, \mathfrak{g}_{-\alpha} ]$ for each $\alpha \in T^r$.

\section{Wide and $\lambda$-wide regular subalgebras}\label{lwidesec}

In this section, we 
suppose that $\lambda \in \Lambda^+$ and 
determine necessary 
and sufficient conditions for a regular  subalgebra  of a semisimple Lie algebra to be $\lambda$-wide (Theorem \ref{lwide}).  As a consequence of this theorem, 
we obtain necessary and sufficient conditions for regular  subalgebras  of a semisimple Lie algebra to be wide (Corollary \ref{lwideb}). Finally, we show that establishing whether or not a regular subalgebra of a simple Lie algebra is wide does not require consideration of all simple modules.  It is necessary and sufficient to consider only the adjoint representation (Corollary \ref{best}).

Before our main results, we present various lemmas.  
Lemmas \ref{gzero},  \ref{notpandn}, and \ref{nzero} have   similarity to lemmas in \cite{panyu}, but are specialized and generalized to the context of the present article.
We begin with a definition.

Let $T$ be  a closed subset of $\Phi$. Then, define an $\mathfrak{s}_{\wc, \mathfrak{h}}$-submodule of $V(\lambda)$: 
\begin{equation}
\cw \cdot \lambda \coloneqq \text{Span}\big\{ e_{-\beta_1} \cdots e_{-\beta_m}\cdot v_\lambda ~|~ -\beta_j \in \cw \cap \Phi^-\big\},
\end{equation}
where $j=1,...,m$, and such that $-\beta_1,...,$ and $-\beta_m$  
 are not necessarily distinct.  Note that we include $v_\lambda \in \cw \cdot \lambda$.
In addition, note that if  $e_{-\beta_1} \cdots e_{-\beta_m}\cdot v_\lambda  \neq 0$, then $\lambda -  \beta_1-  \cdots - \beta_m \in \Pi(V(\lambda))$.   %

\begin{lemma}\label{gzero}
Let $T$ be a  closed subset of $\Phi$, 
and $\mathfrak{t}$  a subalgebra of $\mathfrak{h}$
containing $[\mathfrak{g}_\alpha, \mathfrak{g}_{-\alpha}]$ for each $\alpha \in T^r$. Let 
$V$ be a $\mathfrak{g}$-module, and 
 $\mu \in \Pi (V)$, such that $V^{\mathfrak{s}_{T,\mathfrak{t}}}_{\mu} \neq \{0\}$. Then 
$\langle \mu, \beta \rangle \geq 0$, for all $\beta \in T$. 
And, if $\beta\in T$, and $-\beta \in T$, then $\langle\mu, \beta \rangle =0$. Further,
if both $V^{\mathfrak{s}_{T,\mathfrak{t}}}_{\mu} \neq \{0\}$, and $V^{\mathfrak{s}_{T,\mathfrak{t}}}_{-\mu} \neq \{0\}$, then $\langle\mu, \beta \rangle =0$, 
for all $\beta \in T$.  
\end{lemma}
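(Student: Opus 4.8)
The plan is to reduce the entire statement to the elementary representation theory of $\mathfrak{sl}_2$, applied one root at a time to the $\mathfrak{sl}_2$-triple $\{e_\beta, e_{-\beta}, h_\beta\}$ determined by $e_\beta$, for each $\beta \in T$; finite-dimensionality of $V$ is the only non-formal ingredient. I would fix once and for all a nonzero $v \in V_\mu^{\mathfrak{s}_{T,\mathfrak{t}}}$. For every $\beta \in T$ we have $e_\beta \in \mathfrak{s}_{T,\mathfrak{t}}$, hence $e_\beta \cdot v = 0$; and since $v$ is an $\mathfrak{h}$-weight vector it is in particular an $h_\beta$-eigenvector with eigenvalue $\mu(h_\beta) = \langle \mu, \beta \rangle$.

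For the first assertion I would fix $\beta \in T$ and restrict $V$ to the subalgebra spanned by $e_\beta, e_{-\beta}, h_\beta$. Because $V$ is finite-dimensional the string $v, e_{-\beta}\!\cdot v, e_{-\beta}^2\!\cdot v, \dots$ terminates, so there is a least $k \geq 1$ with $e_{-\beta}^{\,k}\cdot v = 0$ and $e_{-\beta}^{\,k-1}\cdot v \neq 0$. Applying the standard $\mathfrak{sl}_2$-identity (valid since $e_\beta \cdot v = 0$),
\[
0 = e_\beta \cdot \big(e_{-\beta}^{\,k}\cdot v\big) = k\big(\langle \mu, \beta \rangle - k + 1\big)\, e_{-\beta}^{\,k-1}\cdot v ,
\]
and since $k \geq 1$ and $e_{-\beta}^{\,k-1}\cdot v \neq 0$, this forces $\langle \mu, \beta \rangle = k-1 \geq 0$.

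For the second assertion, if $\beta \in T$ and $-\beta \in T$ (equivalently $\beta \in T^r$), then $e_\beta$ and $e_{-\beta}$ both lie in $\mathfrak{s}_{T,\mathfrak{t}}$ and annihilate $v$, so $\langle \mu, \beta\rangle\, v = h_\beta \cdot v = e_\beta\!\cdot(e_{-\beta}\!\cdot v) - e_{-\beta}\!\cdot(e_\beta\!\cdot v) = 0$, whence $\langle \mu, \beta\rangle = 0$; one could equally invoke $h_\beta \in [\mathfrak{g}_\beta,\mathfrak{g}_{-\beta}] \subseteq \mathfrak{t}$. For the last assertion I would additionally take a nonzero $v' \in V_{-\mu}^{\mathfrak{s}_{T,\mathfrak{t}}}$: the first assertion applied to $v$ gives $\langle \mu, \beta\rangle \geq 0$ for all $\beta \in T$, and applied to $v'$ (which has weight $-\mu$) gives $\langle -\mu, \beta\rangle \geq 0$, i.e. $\langle \mu, \beta\rangle \leq 0$, for all $\beta \in T$; together these force $\langle \mu, \beta\rangle = 0$ for every $\beta \in T$.

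I do not expect a genuine obstacle: the lemma is essentially a bookkeeping of the $\mathfrak{sl}_2$ highest-weight principle. The only points deserving care are the use of finite-dimensionality (so that the downward $e_{-\beta}$-string terminates and the $\mathfrak{sl}_2$-identity applies) and the reminder that $e_{-\beta}$ must be the specific partner of $e_\beta$ in its $\mathfrak{sl}_2$-triple, so that $[e_\beta, e_{-\beta}] = h_\beta$ exactly; since the root spaces of $\Phi$ are one-dimensional this is automatic up to a nonzero scalar and is irrelevant to the conclusions, which concern only the integer $\langle \mu,\beta\rangle$.
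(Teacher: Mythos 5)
Your proposal is correct and follows essentially the same route as the paper: the paper also fixes a nonzero $v \in V^{\mathfrak{s}_{T,\mathfrak{t}}}_{\mu}$, notes $e_\beta \cdot v = 0$, and invokes $\mathfrak{sl}_2$-theory to conclude $\langle \mu, \beta \rangle \geq 0$, with the second and third assertions derived as consequences of the first (your direct bracket computation for the symmetric case is a harmless variant). You merely unpack the standard $\mathfrak{sl}_2$ string identity that the paper cites implicitly, and your bookkeeping is accurate throughout.
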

\begin{proof}
The second and third conclusions follow from the first. Hence, we only address the first.

Consider a nonzero $v \in V^{\mathfrak{s}_{T,\mathfrak{t}}}_{\mu}$. Then,  $\mathfrak{g}_\beta \cdot v =0$ for all $\beta \in T$. 
Let $e_\beta \in \mathfrak{g}_\beta \setminus \{0\}$, with corresponding $\mathfrak{sl}_2$-triple $e_\beta$, $e_{-\beta}$, $h_\beta$.
Then, $e_\beta \cdot v =0$, so that $\mathfrak{sl}_2$-theory implies $h_\beta \cdot v = \mu(h_\beta) v$ with $\mu(h_\beta)=\langle \mu, \beta \rangle \geq 0$. 
\end{proof}

\begin{lemma}\label{wideclosure}
Let $S$ be a   closed subset of $\Phi$. Suppose $\beta_1$, $\beta_2$,...,$\beta_l \in S$, and $\beta_1+\beta_2+\cdots+\beta_l \in \Phi$. Then $\beta_1+\beta_2+\cdots +\beta_l \in S$. 
\end{lemma}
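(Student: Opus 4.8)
The plan is to prove the statement by induction on $l$, the number of summands, exploiting the fact that $S$ is closed and that a partial sum of roots, when it lies in $\Phi$, can be reached by adding one root at a time through intermediate roots.

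\medskip

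First I would dispose of the base cases: $l=1$ is trivial (the sum is $\beta_1 \in S$ by hypothesis), and $l=2$ is precisely the definition of a closed subset: $\beta_1, \beta_2 \in S$ and $\beta_1 + \beta_2 \in \Phi$ force $\beta_1 + \beta_2 \in S$. For the inductive step, suppose the claim holds for all sums of fewer than $l$ roots, and let $\beta_1, \dots, \beta_l \in S$ with $\sigma \coloneqq \beta_1 + \cdots + \beta_l \in \Phi$. The key structural fact I would invoke is the standard lemma on root strings / the fact that if a sum of positive-length vectors that are roots is itself a root, then some reordering lets you add them one at a time staying inside $\Phi \cup \{0\}$; more precisely, I would use the following: if $\sigma = \gamma_1 + \cdots + \gamma_l$ with each $\gamma_i \in \Phi$ and $\sigma \in \Phi$, then there is an index $i$ such that $\sigma - \gamma_i \in \Phi$ (equivalently, after reordering, every initial partial sum is a root). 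This is a well-known property of root systems, provable via the invariant bilinear form: one shows $(\sigma, \gamma_i) > 0$ for some $i$ (since $(\sigma, \sigma) > 0$ and $(\sigma,\sigma) = \sum_i (\sigma, \gamma_i)$), and then $(\sigma, \gamma_i) > 0$ with $\sigma \neq \gamma_i$ implies $\sigma - \gamma_i \in \Phi$.

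\medskip

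Granting that, pick such an index $i$; after relabeling assume $i = l$, so $\sigma - \beta_l = \beta_1 + \cdots + \beta_{l-1} \in \Phi$. By the inductive hypothesis applied to $\beta_1, \dots, \beta_{l-1} \in S$ (a sum of $l-1$ roots landing in $\Phi$), we get $\beta_1 + \cdots + \beta_{l-1} \in S$. Now $\beta_1 + \cdots + \beta_{l-1} \in S$ and $\beta_l \in S$ and their sum $\sigma \in \Phi$, so by closedness of $S$ (the $l=2$ case) we conclude $\sigma = \beta_1 + \cdots + \beta_l \in S$, completing the induction.

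\medskip

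The main obstacle is the auxiliary root-system fact that some $\beta_i$ can be peeled off so that $\sigma - \beta_i$ is still a root. The inequality $(\sigma, \sigma) = \sum_i (\sigma, \beta_i)$ forces $(\sigma, \beta_i) > 0$ for at least one $i$ since $(\sigma,\sigma) > 0$; the step $(\sigma, \beta_i) > 0 \Rightarrow \sigma - \beta_i \in \Phi$ (for $\sigma \neq \beta_i$) is standard (e.g. Humphreys, \S 9), and if $\sigma = \beta_i$ then $l$ must effectively be $1$ after cancellation and there is nothing to prove. A small bookkeeping point is that the $\beta_j$ need not be distinct and cancellation among them is possible, but this does not affect the argument: we only ever use that each is an element of $S$ and that the relevant partial sums are roots.
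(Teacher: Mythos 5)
Your proof is correct and follows essentially the same route as the paper's: both hinge on the observation that $(\sigma,\sigma)=\sum_i(\sigma,\beta_i)>0$ forces some $(\sigma,\beta_i)>0$, whence $\sigma-\beta_i\in\Phi$ by the standard lemma, and then peel off roots one at a time using closedness. The only difference is packaging --- you run a clean induction on $l$, while the paper runs the same peeling as an iterated descent inside a proof by contradiction --- and your handling of the degenerate case $\sigma=\beta_i$ (then $\sigma\in S$ outright) is fine.
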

\begin{proof}
Let $\alpha=\beta_1+\cdots +\beta_l \in \Phi$.  By way of contradiction, suppose that  $\alpha  \notin S$. Then, in particular,
$\alpha \neq \beta_i$ for all $i$ with $1\leq i \leq l$. We also must have $l>2$, since we get an immediate contradiction due to the closure of $S$ otherwise.

 Suppose that $(\alpha, \beta_i) \leq 0$ for all $ 1\leq i \leq l$. Then,  $(\alpha, \alpha) =(\alpha, \beta_1)+\cdots+(\alpha, \beta_l) \leq 0$. Hence,
$(\alpha, \alpha)=0$, which implies $\alpha=0$. A contradiction. Thus, it must be the case that $(\alpha, \beta_{k_1}) >0$ for some $k_1$ with  $ 1\leq k_1 \leq l$. 
Since $\alpha \neq \beta_{k_1}$ (and $\alpha \neq -\beta_{k_1}$ since $(\alpha, \beta_{k_1}) >0$), we have $\alpha-\beta_{k_1}= \beta_1+\cdots + \beta_{k_1-1}+\beta_{k_1+1}+\cdots+\beta_l \in \Phi$  [\cite{humphreys}, Lemma 9.4].

If $\alpha-\beta_{k_1}\in S$,  then $\alpha=(\alpha-\beta_{k_1}) +\beta_{k_1} \in S$, by the closure of $S$, which contradicts our initial assumption that $\alpha \notin S$. Hence,
$\alpha-\beta_{k_1}\in  \Phi \setminus S$. In particular,  $\alpha-\beta_{k_1}\neq \beta_{i}$ for all $i$ with $1\leq i \leq l$.
Following the procedure above, we identify $k_2 \neq k_1$ such that 
$\alpha-\beta_{k_1} -\beta_{k_2} \in \Phi$. If $\alpha-\beta_{k_1} -\beta_{k_2} \in S$, then 
$\alpha-\beta_{k_1}=(\alpha-\beta_{k_1} -\beta_{k_2})+\beta_{k_2}\in S$, by the closure of $S$. But, this contradicts our finding that
$\alpha-\beta_{k_1} \in \Phi\setminus S$. Thus, we must have $\alpha-\beta_{k_1} -\beta_{k_2} \in \Phi\setminus S$.

We continue this procedure until we get $\alpha -\beta_{k_1} - \beta_{k_2} -\cdots - \beta_{k_{l-2}} \in  \Phi \setminus S$, which is
a sum of two elements of $S$.  However, this
 is an irreparable contradiction to the closure of $S$. Hence, it must be the case that $\alpha\in S$,  as required.
\end{proof}

\begin{lemma}\label{greatlemma}
Let $\mu$, $\nu \in \Pi(V(\lambda))$, $T$ a closed subset of $\Phi$, and $\wc \cdot \lambda =V(\lambda)$. If $\mu(h_\beta) = \nu(h_\beta)$ for all $\beta \in \wc$,
then $\mu =\nu$. 
\end{lemma}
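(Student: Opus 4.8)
The statement says: if $\wc\cdot\lambda = V(\lambda)$ and two weights $\mu,\nu$ of $V(\lambda)$ agree on $h_\beta$ for all $\beta\in\wc = [T\cup-T]$, then $\mu=\nu$. Since $\wc$ is symmetric closed, and it generates $V(\lambda)$ from $v_\lambda$ via lowering operators $e_{-\beta}$ with $-\beta\in\wc\cap\Phi^-$, every weight $\mu\in\Pi(V(\lambda))$ has the form $\mu = \lambda - \sum_{j}\beta_j$ with $\beta_j\in\wc\cap\Phi^+$ (after collecting: $\mu = \lambda - \gamma$ where $\gamma$ lies in the $\mathbb{Z}_{\geq0}$-span of $\wc\cap\Phi^+$). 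The plan is to extract enough simple roots from $\wc$ so that the pairings $\langle\cdot,\beta\rangle$ for $\beta\in\wc$ separate weights.

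The idea is that the real span $E_T := \mathrm{span}_{\mathbb R}(\wc)$ (equivalently $\mathrm{span}_{\mathbb R}(T\cup-T)$) contains all the differences $\lambda-\mu$ for $\mu\in\Pi(V(\lambda))$, because each such difference is a nonnegative integer combination of roots in $\wc$. So $\mu-\nu = (\lambda-\nu)-(\lambda-\mu) \in E_T$ whenever $\mu,\nu\in\Pi(V(\lambda))$. Now the hypothesis $\langle\mu,\beta\rangle = \langle\nu,\beta\rangle$, i.e. $(\mu-\nu,\beta)=0$, for all $\beta\in\wc$ says $\mu-\nu$ is orthogonal to every element of $\wc$, hence to all of $E_T$. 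But $\mu-\nu \in E_T$ as well, so $(\mu-\nu,\mu-\nu)=0$, and since the form on the real span of roots is positive definite, $\mu-\nu = 0$. That is the entire argument once the first containment is justified.

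So the key steps, in order, are: (1) use $\wc\cdot\lambda = V(\lambda)$ together with Lemma \ref{wideclosure} (or just closure of $\wc$) to show that every weight $\mu$ of $V(\lambda)$ satisfies $\lambda - \mu \in \sum_{\beta\in\wc\cap\Phi^+}\mathbb{Z}_{\geq0}\,\beta \subseteq E_T$; (2) conclude $\mu-\nu\in E_T$; (3) invoke the hypothesis to get $\mu-\nu \perp \wc$, hence $\mu-\nu\perp E_T$; (4) conclude $(\mu-\nu,\mu-\nu)=0$ and use positive-definiteness of $(\cdot,\cdot)$ on the real span of $\Phi$ to finish. Note that identifying $\mathfrak h^*$ with $\mathfrak h$ via $\kappa$ turns $\langle\mu,\beta\rangle = \mu(h_\beta)$ into $\frac{2(\mu,\beta)}{(\beta,\beta)}$, so the hypothesis is genuinely the orthogonality condition $(\mu-\nu,\beta)=0$.

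The only real subtlety—and the step I would be most careful with—is step (1): one must make sure that weights produced by products $e_{-\beta_1}\cdots e_{-\beta_m}\cdot v_\lambda$ (with the $-\beta_j\in\wc\cap\Phi^-$ not necessarily distinct) indeed express $\lambda-\mu$ as a nonnegative combination of roots of $\wc$. This is immediate since each $e_{-\beta_j}$ shifts the weight by $-\beta_j$, and by the definition of $\wc\cdot\lambda$ preceding Lemma \ref{gzero} such products span all of $V(\lambda)$ under the hypothesis; the closure of $\wc$ (Lemma \ref{semireg}, or Lemma \ref{wideclosure}) is not strictly needed for the span statement but reassures us the $\beta_j$ may be taken to be positive roots in $T\cup-T$ rather than arbitrary elements of its closure. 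Everything else is formal linear algebra with the Killing form.
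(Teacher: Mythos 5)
Your argument is correct, and it is genuinely different from the paper's proof. The paper proceeds by upgrading the hypothesis: for each $\alpha \in \Phi \setminus \wc$ it shows $\mu(h_\alpha) = \nu(h_\alpha)$ directly, splitting into the cases $\lambda(h_\alpha) \neq 0$ (where $\lambda - \alpha \in \Pi(V(\lambda))$ is rewritten as $\lambda - \beta_1 - \cdots - \beta_m$ and nondegeneracy of $\kappa$ gives $t_\alpha = t_{\beta_1} + \cdots + t_{\beta_m}$) and $\lambda(h_\alpha) = 0$ (where a comparison of the $\alpha$-strings through $\mu$ and $\nu$ combined with Lemma \ref{wideclosure} forces $\alpha \in \wc$, a contradiction); once the equality holds on all $h_\alpha$, $\alpha \in \Phi$, it holds on $\mathfrak{h}$ and $\mu = \nu$. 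You instead observe that $\wc \cdot \lambda = V(\lambda)$ forces $\lambda - \mu$ and $\lambda - \nu$, hence $\mu - \nu$, to lie in $E_T = \mathrm{span}_{\mathbb{R}}(\wc)$, while the hypothesis says $\mu - \nu \perp E_T$, so positive definiteness of $(\cdot,\cdot)$ on the real span of $\Phi$ finishes it. Your route is shorter and avoids both the case analysis and the $\alpha$-string argument entirely; it isolates the single structural fact actually needed (every weight of $V(\lambda)$ differs from $\lambda$ by an element of the span of $\wc$), and it does not even require the closure or symmetry of $\wc$, only that it spans the relevant subspace. The one point to spell out when writing it up is that a nonzero weight space $V(\lambda)_\mu$ inside $\wc \cdot \lambda$ must contain a nonzero monomial $e_{-\beta_1} \cdots e_{-\beta_m} \cdot v_\lambda$ of weight exactly $\mu$ (weight vectors of distinct weights being independent), which is what legitimizes writing $\mu = \lambda - \beta_1 - \cdots - \beta_m$; you flag this correctly. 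What the paper's longer argument buys is the explicit intermediate statement that $\mu$ and $\nu$ agree on every $h_\alpha$, $\alpha \in \Phi$, but that is equivalent to the conclusion and is not reused elsewhere, so nothing is lost by your approach.
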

\begin{proof}
Let $\alpha \in \Phi \setminus \cw$. We will show that $\mu(h_\alpha)=\nu(h_\alpha)$. 
We may assume $\alpha \in \Phi^+$, since $h_\alpha =-h_{-\alpha}$. We proceed in cases.

\vspace{2mm}

\noindent Case $1$. $\lambda(h_\alpha)\neq 0$: Then, considering the $\alpha$-string through $\lambda$, and that $\alpha \in \Phi^+$, we must have $\lambda -\alpha \in \Pi(V(\lambda))$. 
Since $\cw\cdot \lambda =V(\lambda)$, 
then  $\lambda -\alpha = \lambda - \beta_1-\cdots-  \beta_m \in \Pi(V(\lambda))$, where $-\beta_j \in [T\cup -T] \cap \Phi^-$ for all $1\leq j \leq m$. 
Hence,
$\alpha(h) =  \beta_1(h)+\cdots+  \beta_m(h)$, for all $h\in \mathfrak{h}$. Thus, $\kappa(t_\alpha, h) = \kappa ( t_{\beta_1}+\cdots+  t_{\beta_m}, h)$, for all $h\in \mathfrak{h}$.
By the nondegeneracy of $\kappa$ on $\mathfrak{h}$ [\cite{humphreys}, Corollary 8.2], $t_\alpha =  t_{\beta_1}+\cdots+  t_{\beta_m}$. Therefore,
$\mu(h_\alpha) = \frac{2}{(\alpha,\alpha)} \mu( t_{\beta_1})+\cdots+ \frac{2}{(\alpha,\alpha)} \mu(t_{\beta_m}) =  \frac{2}{(\alpha,\alpha)} \nu( t_{\beta_1})+\cdots+ \frac{2}{(\alpha,\alpha)} \nu(t_{\beta_m}) 
=\nu(h_\alpha)$. 

\vspace{2mm}

\noindent Case $2$. $\lambda(h_\alpha)= 0$: Consider the $\alpha$-strings through $\mu$ and $\nu$:
\begin{equation}
\begin{array}{llllll}
\mu-r\alpha,...,\mu,...,\mu+q\alpha, &  r-q =\langle \mu, \alpha \rangle,& r, q \in \mathbb{N},\\
\nu-r'\alpha,...,\nu,...,\nu+q'\alpha,&  r'-q' =\langle \nu, \alpha \rangle,  &r', q' \in \mathbb{N}.
\end{array}
\end{equation}
If $\langle \mu, \alpha \rangle=\langle \nu, \alpha \rangle$, we're done. 
Otherwise, we must have $r \neq r'$, or $q\neq q'$.  We'll assume $q<q'$, since the other cases are similar. Hence, 
$\nu+\alpha \in \Pi(V(\lambda))$.
Since $[T\cup -T]\cdot \lambda =V(\lambda)$, we have 
\begin{equation}
\begin{array}{rllllll}
\nu&=&\lambda - \gamma_1-\cdots - \gamma_n,\\
\nu+\alpha  &=& \lambda -  \tau_1-\cdots - \tau_p,
\end{array}
\end{equation}
for some $-\gamma_i, -\tau_j \in \cw \cap \Phi^-$ for all $i, j$, with $1\leq i \leq n$, and $1\leq j \leq p$. Hence, $\alpha =  -\tau_1-\cdots - \tau_p + \gamma_1+\cdots + \gamma_n \in \Phi$.  Hence, by Lemma \ref{wideclosure}, $\alpha \in \cw$, a contradiction.  Therefore, we've established $\mu(h_\alpha) =\nu(h_\alpha)$ for all $\alpha \in \Phi$.
Thus, we must have $\mu =\nu$, as required. 
\end{proof}

\begin{lemma}\label{notpandn}
Let $T$ be a closed subset of $\Phi$, and $\mathfrak{t}$  a subalgebra of $\mathfrak{h}$ containing $[\mathfrak{g}_\alpha, \mathfrak{g}_{-\alpha} ]$ for each $\alpha \in T^r$.
Further, let $\wc\cdot \lambda =V(\lambda)$.
Then, for $\mu, -\mu \in \Pi(V(\lambda)^* \otimes V(\lambda))\setminus \{0\}$, we cannot have both $(V(\lambda)^* \otimes V(\lambda))^{\mathfrak{s}_{T, \mathfrak{t}}}_\mu \neq \{0\}$, and $(V(\lambda)^* \otimes V(\lambda))^{\mathfrak{s}_{T, \mathfrak{t}}}_{-\mu} \neq  \{0\}$. 
\end{lemma}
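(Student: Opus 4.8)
The plan is to combine the weight‑string estimate of Lemma \ref{gzero} with the separation property of Lemma \ref{greatlemma}. Suppose for contradiction that both $(V(\lambda)^*\otimes V(\lambda))^{\mathfrak{s}_{T,\mathfrak{t}}}_{\mu}\neq\{0\}$ and $(V(\lambda)^*\otimes V(\lambda))^{\mathfrak{s}_{T,\mathfrak{t}}}_{-\mu}\neq\{0\}$ for some nonzero weight $\mu\in\Pi(V(\lambda)^*\otimes V(\lambda))$. Apply Lemma \ref{gzero} to the $\mathfrak{g}$‑module $V=V(\lambda)^*\otimes V(\lambda)$, with the closed subset $\wc$ (which contains $T$) and the Cartan $\mathfrak{h}$ itself in the role of $\mathfrak{t}$: since a vector killed by $\mathfrak{s}_{T,\mathfrak{t}}$ is a fortiori not obviously killed by $\mathfrak{s}_{\wc,\mathfrak{h}}$, I first need to pass from invariance under $\mathfrak{s}_{T,\mathfrak{t}}$ to invariance under enough root vectors. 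The key observation is that Lemma \ref{gzero} only uses that $e_\beta\cdot v=0$ for $\beta\in T$, so from $(V(\lambda)^*\otimes V(\lambda))^{\mathfrak{s}_{T,\mathfrak{t}}}_{\pm\mu}\neq\{0\}$ we already get $\langle\pm\mu,\beta\rangle\geq 0$ for all $\beta\in T$, hence $\langle\mu,\beta\rangle=0$ for all $\beta\in T$, and therefore also for all $\beta\in -T$, i.e. $\mu(h_\beta)=0$ for every $\beta\in T\cup -T$.

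Next I would promote this vanishing from $T\cup -T$ to the closure $\wc$. Since $\mu(h_\beta)=0$ for all $\beta\in T\cup -T$, and every element of $\wc$ is a sum $\beta_1+\cdots+\beta_l$ of elements of $T\cup -T$ that happens to be a root (by the description of $[\,\cdot\,]$ together with Lemma \ref{wideclosure}), additivity of $h_{(\cdot)}$ via $t_{\gamma_1+\cdots+\gamma_l}=t_{\gamma_1}+\cdots+t_{\gamma_l}$ (nondegeneracy of $\kappa$, as in Case 1 of Lemma \ref{greatlemma}) gives $\mu(h_\alpha)=0$ for all $\alpha\in\wc$. Now invoke Lemma \ref{greatlemma}: the hypothesis $\wc\cdot\lambda=V(\lambda)$ is in force, and the zero weight $0\in\Pi(V(\lambda))$ certainly satisfies $0(h_\beta)=0=\mu(h_\beta)$ for all $\beta\in\wc$; but Lemma \ref{greatlemma} is stated for weights of $V(\lambda)$, whereas $\mu$ is a weight of $V(\lambda)^*\otimes V(\lambda)$. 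To bridge this I would note that $\Pi(V(\lambda)^*\otimes V(\lambda))\subseteq \mathcal{Q}$ and that $\mu=\rho-\sigma$ for weights $\rho,\sigma\in\Pi(V(\lambda))$; applying the additivity/nondegeneracy argument to $\mu(h_\alpha)=\rho(h_\alpha)-\sigma(h_\alpha)$ shows $\rho$ and $\sigma$ agree on all of $\wc$, so Lemma \ref{greatlemma} forces $\rho=\sigma$, i.e. $\mu=0$, contradicting $\mu\neq 0$.

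The main obstacle I anticipate is exactly this last transfer step: Lemma \ref{greatlemma} as stated compares two weights of $V(\lambda)$, and one must be slightly careful that a weight $\mu$ of the tensor product $V(\lambda)^*\otimes V(\lambda)$ which vanishes on $\wc$ really does descend to a statement about $V(\lambda)$. The clean way is not to compare $\mu$ with $0$ directly in $V(\lambda)^*\otimes V(\lambda)$, but to write $\mu$ as a difference of weights $\rho,\sigma$ of $V(\lambda)$ — which exists precisely because any weight of $V(\lambda)^*\otimes V(\lambda)$ is $\rho^* + \sigma$ with $\rho^*=-\rho$ a weight of $V(\lambda)^*$ — and then apply Lemma \ref{greatlemma} to the pair $(\sigma,\rho)$. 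Everything else (the application of Lemma \ref{gzero}, the promotion of vanishing from $T\cup-T$ to $\wc$ via Lemma \ref{wideclosure} and nondegeneracy of $\kappa$) is routine and follows the pattern already set in the proofs above.
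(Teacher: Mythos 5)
Your proposal is correct and follows essentially the same route as the paper's proof: apply Lemma \ref{gzero} to get $\langle\mu,\beta\rangle=0$ on $T\cup -T$, extend to all of $\wc$ by writing each element of the closure as a sum of elements of $T\cup -T$, then write $\mu=\tau-\nu$ with $\tau,\nu\in\Pi(V(\lambda))$ and invoke Lemma \ref{greatlemma} to force $\mu=0$. The ``transfer step'' you flag as a potential obstacle is handled in the paper exactly as you propose, by passing to a difference of weights of $V(\lambda)$.
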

\begin{proof} 
By way of contradiction, suppose that both $(V(\lambda)^* \otimes V(\lambda))^{\mathfrak{s}_{T, \mathfrak{t}}}_\mu \neq \{0\}$, and $(V(\lambda)^* \otimes V(\lambda))^{\mathfrak{s}_{T, \mathfrak{t}}}_{-\mu} \neq \{0\}$. Then, by Lemma \ref{gzero}, 
$\langle \mu, \beta \rangle=0$ for all $\beta \in T$, and hence for all $\beta \in (T\cup -T)$. 

We now establish that, in fact, for all $\beta \in \cw$, we have $\langle \mu, \beta \rangle=0$.  
Since $\beta \in \cw$, we have $\beta= \beta_1+\cdots+\beta_m$, for some $\beta_1,...,\beta_m \in (T\cup-T)$. Hence, 
$(\mu, \beta)=0$ for all $\beta \in \cw$, which implies $\langle \mu, \beta \rangle=0$ for all $\beta \in \cw$. 

Since $\mu \in \Pi(V(\lambda)^* \otimes V(\lambda))$, then $\mu = \tau-\nu$ for some $\tau, \nu \in \Pi(V(\lambda))$. Hence, we have  
$\langle \tau,  \beta \rangle = \langle \nu, \beta \rangle $ for all $\beta \in \cw$. 
By Lemma \ref{greatlemma},  this implies that $\tau=\nu$, so that $\mu=0$, a contradiction. Thus, it must be the case that  not both $(V(\lambda)^* \otimes V(\lambda))^{\mathfrak{s}_{T, \mathfrak{t}}}_\mu \neq \{0\}$, and  $(V(\lambda)^* \otimes V(\lambda))^{\mathfrak{s}_{T, \mathfrak{t}}}_{-\mu} \neq \{0\}$. 
\end{proof}

\begin{lemma}\label{nzero}
Let $T$ be a closed subset of $\Phi$, 
$\lambda \in \Lambda^+$,  $\mathfrak{t}$ be a subalgebra of $\mathfrak{h}$
containing $[\mathfrak{g}_\alpha, \mathfrak{g}_{-\alpha}]$ for every $\alpha \in T^r$, and $\cw \cdot \lambda =V(\lambda)$. Further,  let $V(\rho)$ be a simple $\mathfrak{g}$-module in the
$\mathfrak{g}$-module decomposition of $V(\lambda)^* \otimes V(\lambda)$, with $\rho \neq 0$.
Then,    $V(\rho)^{\mathfrak{s}_{T,\mathfrak{t}}}_0 =V(\rho)^{\mathfrak{s}_{\wc,\mathfrak{h}}}_0   =\{0\}$.  
\end{lemma}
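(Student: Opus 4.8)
The plan is to show $V(\rho)^{\mathfrak{s}_{T,\mathfrak{t}}}_0 = \{0\}$ by first reducing to the larger algebra $\mathfrak{s}_{\wc,\mathfrak{h}}$, and then ruling out a nonzero weight-$0$ invariant vector there using Lemma \ref{greatlemma}. First I would observe that since $\mathfrak{t} \subseteq \mathfrak{h}$ and $T \subseteq \wc$, we have $\mathfrak{s}_{T,\mathfrak{t}} \subseteq \mathfrak{s}_{\wc,\mathfrak{h}}$, hence $V(\rho)^{\mathfrak{s}_{\wc,\mathfrak{h}}}_0 \subseteq V(\rho)^{\mathfrak{s}_{T,\mathfrak{t}}}_0$. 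The two quantities in the statement are actually asserted to be equal; I would prove the reverse inclusion $V(\rho)^{\mathfrak{s}_{T,\mathfrak{t}}}_0 \subseteq V(\rho)^{\mathfrak{s}_{\wc,\mathfrak{h}}}_0$ as follows. Take $0 \neq v \in V(\rho)^{\mathfrak{s}_{T,\mathfrak{t}}}_0$. Then $\mathfrak{g}_\beta \cdot v = 0$ for every $\beta \in T$ (and of course $\mathfrak{h} \cdot v = 0$ since $v$ has weight $0$, in particular $\mathfrak{t}\cdot v = 0$). By Lemma \ref{gzero} applied to $\mu = 0$... actually $\mu=0$ gives no information; instead I should argue directly: for $\beta \in T^u$, a Serre-relation / $\mathfrak{sl}_2$-string argument shows that the $\mathfrak{g}$-submodule generated by $v$ under repeated application of the $e_\gamma$, $\gamma \in T$, together with lowering operators, forces $\mathfrak{g}_{-\beta}\cdot v$ and all of $\mathfrak{g}_{\wc}\cdot v$ to vanish. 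The cleanest route: since $\wc = [T \cup -T]$, every root in $\wc$ is a sum of roots from $T \cup -T$; so it suffices to show $\mathfrak{g}_\delta \cdot v = 0$ for all $\delta \in T \cup -T$, i.e. for all $\delta \in -T$ as well. For $\delta = -\beta$ with $\beta \in T$: if $-\beta \in T$ too, then $\beta \in T^r$ and $v$ is killed by both $e_{\pm\beta}$ because $\mathfrak{sl}_2$-theory with $h_\beta \cdot v = 0$ and $e_\beta \cdot v = 0$ forces $v$ to be a trivial $\mathfrak{sl}_2$-summand. If $-\beta \notin T$, then $\beta \in T^u$, and one uses that $T^u$ is closed (Lemma \ref{lem:deccl}) plus the fact that $v$ is $\mathfrak{t}$-invariant of weight $0$ to run an induction on $\mathrm{ht}(\beta)$ showing $e_{-\beta}\cdot v = 0$; the base and inductive step both reduce to $\mathfrak{sl}_2$-string computations inside the $\langle e_\beta, e_{-\beta}, h_\beta\rangle$-triple. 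This establishes $v \in V(\rho)^{\mathfrak{s}_{\wc,\mathfrak{h}}}_0$, giving the claimed equality.

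Next, to show this common space is $\{0\}$: suppose $0 \neq v \in V(\rho)^{\mathfrak{s}_{\wc,\mathfrak{h}}}_0$. Realize $V(\rho)$ inside $V(\lambda)^* \otimes V(\lambda)$ as a $\mathfrak{g}$-submodule, so $v$ is a weight-$0$ vector there annihilated by $\mathfrak{s}_{\wc,\mathfrak{h}}$. Decompose $v = \sum_i c_i\, (f_i \otimes w_i)$ in terms of weight vectors; since $\wc \cdot \lambda = V(\lambda)$, every weight vector of $V(\lambda)$ is reachable from $v_\lambda$ by operators $e_{-\beta}$, $-\beta \in \wc \cap \Phi^-$, and dually for $V(\lambda)^*$. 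The key point is that $v$ being annihilated by all $e_\beta$, $\beta \in \wc$ (both signs, since $\wc$ is symmetric), together with having weight $0$, makes $v$ a highest-weight-like vector for the whole "visible" part of $\mathfrak{g}$ generated by $\wc$. I would then invoke Lemma \ref{greatlemma} in the following way: if $v$ generates a nonzero $\mathfrak{g}$-submodule $V(\rho)$ with $\rho \neq 0$, then $V(\rho)$ contains a weight $\mu$ with $\mu \neq 0$ but $\langle \mu, \beta\rangle = \langle 0, \beta\rangle = 0$ for all $\beta \in \wc$ (taking $\mu$ to be the weight of a suitable extremal vector reachable by $\wc$-operators, which all act trivially on the relevant combination) — contradicting Lemma \ref{greatlemma}, which (applied with $\nu = 0 \in \Pi(V(\lambda)^*\otimes V(\lambda))$, noting $\Pi$ of that tensor product differences of $\Pi(V(\lambda))$) forces $\mu = 0$. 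More carefully, I would argue: the zero weight space of $V(\rho)$, were it to contain a $\mathfrak{s}_{\wc,\mathfrak{h}}$-invariant vector, would make the trivial module a $\mathfrak{s}_{\wc,\mathfrak{h}}$-summand of $V(\rho)$; pushing this through the tensor-product realization and Lemma \ref{greatlemma} (which says weights of $V(\lambda)^*\otimes V(\lambda)$ are determined by their pairings against $\wc$ when $\wc\cdot\lambda = V(\lambda)$) forces $\rho$'s highest weight to pair to zero against all of $\wc$, hence against all of $\Phi$ by the span argument in Lemma \ref{greatlemma}'s proof, hence $\rho = 0$ — the excluded case.

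The main obstacle I anticipate is the first reduction: carefully proving that a $\mathfrak{s}_{T,\mathfrak{t}}$-invariant weight-$0$ vector is automatically $\mathfrak{s}_{\wc,\mathfrak{h}}$-invariant. The subtlety is that $\mathfrak{s}_{T,\mathfrak{t}}$ only contains the raising operators $e_\beta$ for $\beta \in T^u$, not the lowering operators $e_{-\beta}$, so one cannot immediately apply full $\mathfrak{sl}_2$-representation theory to the $\beta \in T^u$ triples. The resolution should use that $v$ has weight $0$, so $h_\beta \cdot v = 0$ for \emph{every} root $\beta$; combined with $e_\beta \cdot v = 0$ for $\beta \in T$, the vector $v$ sits at the top \emph{and} the middle of any $\mathfrak{sl}_2^\beta$-string, forcing the string to have length one, i.e. $e_{-\beta}\cdot v = 0$ as well. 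Then an induction on height over $\wc$, using that sums of roots in $T \cup -T$ that are roots stay in $\wc$ (Lemma \ref{wideclosure}) and the bracket relations $[e_{-\beta}, e_{-\gamma}]$ spanning $\mathfrak{g}_{-\beta-\gamma}$, propagates the vanishing to all of $\mathfrak{g}_\delta$, $\delta \in \wc \cap \Phi^-$, and symmetrically to $\wc \cap \Phi^+$. Once that is in hand, the contradiction via Lemma \ref{greatlemma} is comparatively routine.
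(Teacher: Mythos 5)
Your first step --- showing that a weight-$0$ vector $v$ annihilated by $\mathfrak{s}_{T,\mathfrak{t}}$ is automatically annihilated by all of $\mathfrak{s}_{\cw,\mathfrak{h}}$ --- is correct and is essentially the paper's argument: since $v$ has weight $0$, $h_\beta\cdot v=0$ for every root $\beta$, so $e_\beta\cdot v=0$ for $\beta\in T$ forces $e_{-\beta}\cdot v=0$ by $\mathfrak{sl}_2$-theory (no case split between $T^r$ and $T^u$ is needed, and the height induction you sketch for $\beta\in T^u$ is unnecessary --- the observation in your final paragraph is the whole point), after which the vanishing propagates to all of $\cw$ by writing each new element as $\gamma+\delta$ with $\gamma,\delta$ already handled and using $\mathfrak{g}_{\gamma+\delta}=[\mathfrak{g}_\gamma,\mathfrak{g}_\delta]$.

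The second half has a genuine gap. You assert that a trivial $\mathfrak{s}_{\cw,\mathfrak{h}}$-summand in $V(\rho)$ ``forces $\rho$'s highest weight to pair to zero against all of $\cw$,'' but no argument is given and none follows from what you have established: the only vector ``reachable from $v$ by $\cw$-operators'' is $v$ itself (they all kill it), so you never produce a nonzero weight $\mu$ with $\langle\mu,\beta\rangle=0$ for all $\beta\in\cw$ to feed into Lemma \ref{greatlemma}. Indeed, without the hypothesis $\cw\cdot\lambda=V(\lambda)$ the assertion is false (the adjoint module of $\mathfrak{sl}_3$ has a trivial summand over $\mathfrak{s}_{\{\pm\alpha_1\},\mathfrak{h}}$ even though the highest root pairs nontrivially with $\alpha_1$), so a correct argument must use that hypothesis essentially, and yours does not at this stage. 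The missing idea is to extend the annihilation of $v$ beyond $\cw$: if $e_\alpha\cdot v\neq 0$ for some $\alpha\in\Phi\setminus\cw$, then $\alpha$ is a nonzero weight of $V(\lambda)^*\otimes V(\lambda)$, hence a difference of two weights of $V(\lambda)$; since $\cw\cdot\lambda=V(\lambda)$, each such weight is $\lambda$ minus a sum of elements of $\cw$, so $\alpha$ is a root expressible as a sum of elements of the symmetric closed set $\cw$, whence $\alpha\in\cw$ by Lemma \ref{wideclosure}, a contradiction. Therefore $\mathfrak{g}\cdot v=0$, so $v$ spans a trivial $\mathfrak{g}$-submodule of the simple module $V(\rho)$, which is impossible since $\rho\neq 0$. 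This is how the paper concludes; Lemma \ref{greatlemma} is not needed here (it is the engine of Lemma \ref{notpandn}, not of this one).
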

\begin{proof} Observe that $V(\rho)^{\mathfrak{s}_{T,\mathfrak{t}}}_0 = V(\rho)^{\mathfrak{s}_{T,\mathfrak{h}}}_0$. Hence, to show $V(\rho)^{\mathfrak{s}_{T,\mathfrak{t}}}_0  =\{0\}$,
 it suffices to show $V(\rho)^{\mathfrak{s}_{T,\mathfrak{h}}}_0 =\{0\}$.
 By way of contradiction, suppose there is a nonzero $v \in V(\rho)^{\mathfrak{s}_{T, \mathfrak{h}}}_0$. Then, $\mathfrak{g}_\beta \cdot v =0$ for all $\beta \in T$, and $\mathfrak{h} \cdot v =0$. 
 We'll first show that $\mathfrak{s}_{[T \cup-T], \mathfrak{h}} \cdot v =0$, which will imply $V(\rho)^{\mathfrak{s}_{T,\mathfrak{h}}}_0 =V(\rho)^{\mathfrak{s}_{[T\cup -T],\mathfrak{h}}}_0$. 
We establish this in two cases.

\vspace{2mm}

\noindent Case $1$. $\mathit{\beta \in  (T \cup -T)}$: Let  $\alpha \in T$,  and $e_\alpha \in \mathfrak{g}_\alpha$, with corresponding  $\mathfrak{sl}_2$-triple given by
 $e_\alpha \in \mathfrak{g}_\alpha$, $e_{-\alpha} \in \mathfrak{g}_{-\alpha}$ and  $h_\alpha \in [\mathfrak{g}_\alpha, \mathfrak{g}_{-\alpha}]$. Since $e_\alpha \cdot v =0$ and $h_\alpha \cdot v =0$, 
$\mathfrak{sl}_2$-theory implies that $e_{-\alpha} \cdot v =0$, so that $\mathfrak{g}_{-\alpha} \cdot v =0$. Hence, $\mathfrak{g}_{\beta} \cdot v =0$ for all $\beta \in (T \cup -T)$.

\vspace{2mm}

\noindent Case $2$. $\mathit{\beta \in  [T \cup -T]}$:  First we  construct  $[T \cup -T]$ recursively as follows. Define $[T \cup -T]_0 = (T \cup -T)$,
$[T \cup -T]_1 = \{\alpha+\beta \in \Phi \setminus [T \cup -T]_0 ~|~  \alpha, \beta \in [T \cup -T]_0  \}$, and 
$[T \cup -T]_n= \{\alpha+\beta \in \Phi \setminus \cup_{i=0}^{n-1}[T \cup -T]_i ~|~  \alpha, \beta \in \cup_{i=0}^{n-1} [T \cup -T]_{i}  \}$, for $n>1$. Since $\Phi$ is finite,  
there must be a $k \in \mathbb{N}$ such that  $[T \cup -T]_k \neq \emptyset$, but  $[T \cup -T]_{k+1} = \emptyset$. Then $[T \cup -T] = \cup_{i=0}^k [T \cup -T]_i$.

Having constructed $[T \cup -T]= \cup_{i=0}^k [T \cup -T]_i$, we'll now show $\mathfrak{g}_{\beta} \cdot v =0$ for all $\beta \in [T \cup -T]_n$ by induction on $n$. If $\beta \in [T \cup -T]_0 = (T \cup -T)$, then
we've already shown $\mathfrak{g}_{\beta} \cdot v =0$ in Case 1. Assume $\mathfrak{g}_{\tau} \cdot v =0$ for all $\tau \in [T \cup -T]_{i}$, $i \leq n-1$, and consider $\beta \in [T \cup -T]_{n}$. 
We must have $\beta=\gamma+\delta$ for some $\gamma, \delta \in \cup_{i=0}^{n-1} [T \cup -T]_i$. Hence, $\mathfrak{g}_{\gamma+\delta} \cdot v = [\mathfrak{g}_{\gamma}, \mathfrak{g}_{\delta}] \cdot v=  \mathfrak{g}_{\gamma} \cdot \mathfrak{g}_{\delta}\cdot v -  \mathfrak{g}_{\delta}\cdot  \mathfrak{g}_{\gamma} \cdot v =0$, since $\mathfrak{g}_{\gamma} \cdot v =0$ and $\mathfrak{g}_{\delta} \cdot v =0$, by the induction hypothesis.

\vspace{2mm}

Hence, the above two cases establish that $\mathfrak{s}_{[T \cup-T], \mathfrak{h}} \cdot v =0$ (which, as noted above, implies $V(\rho)^{\mathfrak{s}_{T,\mathfrak{h}}}_0 =V(\rho)^{\mathfrak{s}_{[T\cup -T],\mathfrak{h}}}_0$).  We now show that, in fact, $e_\alpha \cdot v=0$, for each $\alpha \in \Phi$. By way of contradiction, suppose 
$e_\alpha \cdot v \neq 0$ for some $\alpha \in \Phi \setminus [T\cup-T]$. Then, $\alpha \in \Pi(V(\lambda)^* \otimes V(\lambda))$ so that $\alpha = \beta_1+\cdots+\beta_n \in \Phi$, for 
some $\beta_1,...,\beta_n \in [T\cup -T]$, since $\cw \cdot \lambda =V(\lambda)$. This implies $\alpha = \beta_1+\cdots+\beta_n \in \cw$, by Lemma \ref{wideclosure}. A contradiction. Hence, it must be the case that
 $e_\alpha \cdot v=0$ for all $\alpha \in \Phi$. However, since $v\in V(\rho)$, and $\rho \neq 0$, this isn't possible. Thus, $V(\lambda)^{\mathfrak{s}_{T,\mathfrak{t}}}_0 =V(\lambda)^{\mathfrak{s}_{T,\mathfrak{h}}}_0 =\{0\}$.
\end{proof}

\begin{lemma}\label{compo}
Let $\mu$ and $\eta  \in \Pi(\End~ V(\lambda))$. Suppose $g_\mu \in (\End~ V(\lambda))_\mu$, and $g_\eta \in (\End ~V(\lambda))_\eta$, then $g_\mu \circ g_\eta \in (\End ~V(\lambda))_{\mu+\eta}$.
\end{lemma}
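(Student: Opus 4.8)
The plan is to recognize that $\End V(\lambda) \cong V(\lambda)^* \otimes V(\lambda)$ carries the tensor-product $\mathfrak{g}$-action, under which an endomorphism $g$ is acted on by $x \in \mathfrak{g}$ via $x \cdot g = \rho(x) \circ g - g \circ \rho(x)$, where $\rho \colon \mathfrak{g} \to \End V(\lambda)$ is the representation. In particular the weight-space decomposition of $\End V(\lambda)$ as an $\mathfrak{h}$-module is exactly the one induced by this bracket action: $g_\mu \in (\End V(\lambda))_\mu$ means $h \cdot g_\mu = [\rho(h), g_\mu] = \mu(h)\, g_\mu$ for all $h \in \mathfrak{h}$. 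So the whole statement is about how this bracket-weight grading interacts with composition of endomorphisms.

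First I would unwind the hypotheses explicitly: for all $h \in \mathfrak{h}$ we have $\rho(h) g_\mu - g_\mu \rho(h) = \mu(h) g_\mu$ and $\rho(h) g_\eta - g_\eta \rho(h) = \eta(h) g_\eta$. Then I would compute $[\rho(h), g_\mu \circ g_\eta]$ directly. Inserting and cancelling the middle term, $\rho(h) g_\mu g_\eta - g_\mu g_\eta \rho(h) = (\rho(h) g_\mu - g_\mu \rho(h)) g_\eta + g_\mu (\rho(h) g_\eta - g_\eta \rho(h))$; substituting the two hypotheses turns the right-hand side into $\mu(h) g_\mu g_\eta + \eta(h) g_\mu g_\eta = (\mu + \eta)(h)\, g_\mu g_\eta$. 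This says precisely that $g_\mu \circ g_\eta$ lies in $(\End V(\lambda))_{\mu + \eta}$, provided it is nonzero; if it is zero there is nothing to prove since $0$ belongs to every weight space. One should also note in passing that $\mu + \eta$ need not a priori lie in $\Pi(\End V(\lambda))$ — but if $g_\mu \circ g_\eta \neq 0$ then by definition $\mu + \eta$ is a weight, and if it is $0$ the claim is vacuous, so the statement is consistent as written.

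There is essentially no obstacle here: the only thing to be careful about is making the identification of the $\mathfrak{h}$-weight grading on $\End V(\lambda)$ with the adjoint/bracket action completely explicit, since the lemma is stated purely in terms of weight spaces $(\End V(\lambda))_\mu$ without recalling which module structure on $\End V(\lambda)$ is meant. Once that bookkeeping is fixed, the proof is the three-line derivation above — the classical fact that the commutator grading is compatible with the associative product, i.e. that $\End V(\lambda)$ with its weight decomposition is a graded associative algebra over the weight lattice. I would present it in roughly that order: recall the $\mathfrak{g}$-module (hence $\mathfrak{h}$-module) structure on $\End V(\lambda)$, restate the hypotheses as commutator identities, do the one-line telescoping computation of $[\rho(h), g_\mu \circ g_\eta]$, and conclude.
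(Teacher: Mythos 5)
Your proof is correct, but it takes a genuinely different route from the paper's. You identify the weight grading on $\End V(\lambda)$ with the commutator action $h\cdot g=[\rho(h),g]$ and then use the fact that $[\rho(h),-]$ is a derivation of the associative product: the telescoping identity $[\rho(h),\,g_\mu\circ g_\eta]=[\rho(h),g_\mu]\circ g_\eta+g_\mu\circ[\rho(h),g_\eta]=(\mu+\eta)(h)\,g_\mu\circ g_\eta$ finishes the argument in one line. The paper instead fixes a weight basis $e_{ij}$ of $V(\lambda)$, transports it through the isomorphism $\Psi\colon V(\lambda)^*\otimes V(\lambda)\to\End V(\lambda)$ to the basis of rank-one operators $\Psi(e^{ij}\otimes e_{kl})$ of weight $-\gamma_i+\gamma_k$, and verifies directly that the composite of two such elementary operators is either zero or another elementary operator whose weight is the sum of the two; the general case then follows by writing $g_\mu$ and $g_\eta$ as sums of elementary operators of the appropriate weights. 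Both arguments establish the same graded-algebra fact; yours is coordinate-free and shorter, while the paper's makes the weight bookkeeping explicit at the level of matrix units. Your observation that $g_\mu\circ g_\eta$ may vanish --- in which case the containment is trivial and $\mu+\eta$ need not actually lie in $\Pi(\End V(\lambda))$ --- is a point the paper leaves implicit, and it is worth stating.
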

\begin{proof} 
Consider the weight space decomposition $V(\lambda) = \oplus_i V(\lambda)_{\gamma_i}$, and let $e_{i 1},...,e_{i n_i}$ be a basis of $V(\lambda)_{\gamma_i}$ and $e^{ij}\in V(\lambda)^*$ the corresponding dual of $e_{ij}$. That is, $e^{ij}(e_{kl}) =\delta_{(i,j), (k,l) }$. 

We have $V(\lambda)^* \otimes V(\lambda) \cong   \End ~V(\lambda)$, with the  the familiar isomorphism $\Psi: V(\lambda)^* \otimes V(\lambda)  \rightarrow  \End ~V(\lambda)$, given by
$\Psi (e^{ij} \otimes e_{kl} )(w) = e^{ij}(w) e_{kl}, ~ \text{for}~ w \in V(\lambda)$. Note that  $\Psi(\sum_{ij} e^{ij} \otimes e_{ij} ) =\text{Id}_{V(\lambda)}$.

Observe the weights of basis elements: $\text{wt}( e^{ij})= -\gamma_i$, $\text{wt}( e_{ij})= \gamma_i$, and thus
$\text{wt}( e^{ij} \otimes e_{kl})= -\gamma_i+\gamma_k$. Further,
\begin{equation}
\begin{array}{lllllll}
\Psi ( e^{ij} \otimes e_{kl}) \circ \Psi( e^{mn} \otimes e_{pq}) =
\begin{cases}
\Psi( e^{mn} \otimes e_{kl}), &(i,j)=(p,q),\\
0, &\text{otherwise}.
\end{cases}
\end{array}
\end{equation}
This implies $\text{wt}(\Psi( e^{ij} \otimes e_{kl}) \circ \Psi( e^{mn} \otimes e_{ij})) =\text{wt}( \Psi( e^{mn} \otimes e_{kl}))
= \text{wt} ( \Psi(e^{ij} \otimes e_{kl})) + \text{wt} ( \Psi(e^{mn} \otimes e_{ij}))$. 

Any $g_\mu \in (\End~ V(\lambda))_\mu$ 
may be written as a sum of the image of elementary basis elements $\Psi ( e^{ij} \otimes e_{kl})$, each of weight $\mu$ (i.e., $\mu= -\gamma_i+\gamma_k$).  An 
analogous statement holds for $g_\eta \in (\End~ V(\lambda))_\eta$. Hence, the result follows. 
\end{proof}

The following well-known result will be used to establish indecomposability below. It was used  in \cite{panyu}, for instance.

\begin{lemma}\label{idempotentlemma}
Let  $\mathfrak{s}$ be a subalgebra of the semisimple Lie algebra $\mathfrak{g}$, and $V$ a $\mathfrak{g}$-module.
Then, $V$  is $\mathfrak{s}$-indecomposable if and only if
$(\End ~V)^\mathfrak{s}$ contains no non-trivial idempotents.
\end{lemma}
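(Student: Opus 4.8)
The plan is to identify $(\End V)^{\mathfrak{s}}$ with the ring $\End_{\mathfrak{s}}(V)$ of $\mathfrak{s}$-module homomorphisms of $V$, and then to observe that non-trivial idempotents in this ring correspond exactly to proper direct sum decompositions of $V$ as an $\mathfrak{s}$-module; the equivalence then follows by taking contrapositives in both directions.

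First I would unwind the induced $\mathfrak{g}$-action (hence $\mathfrak{s}$-action) on $\End V \cong V^* \otimes V$: for $x \in \mathfrak{g}$, $f \in \End V$, and $v \in V$ one has $(x \cdot f)(v) = x\cdot f(v) - f(x\cdot v)$. Thus $f \in (\End V)^{\mathfrak{s}}$, i.e. $x\cdot f = 0$ for all $x \in \mathfrak{s}$, precisely when $f(x\cdot v) = x\cdot f(v)$ for all $x \in \mathfrak{s}$ and $v \in V$; that is, $(\End V)^{\mathfrak{s}} = \End_{\mathfrak{s}}(V)$. The idempotents $0$ and $\mathrm{Id}_V$ always lie in this ring, and these are the ``trivial'' idempotents in the statement; they correspond to the two trivial decompositions $V = 0 \oplus V$ and $V = V \oplus 0$.

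For the ``if'' direction in contrapositive form: suppose $V$ is $\mathfrak{s}$-decomposable, say $V = V_1 \oplus V_2$ with $V_1, V_2$ nonzero $\mathfrak{s}$-submodules. Let $e\colon V \to V$ be the projection onto $V_1$ with kernel $V_2$. Since $V_1$ and $V_2$ are $\mathfrak{s}$-stable, $e$ commutes with the $\mathfrak{s}$-action, so $e \in (\End V)^{\mathfrak{s}}$; it is an idempotent, and it is non-trivial because $e \neq 0$ (as $V_1 \neq 0$) and $e \neq \mathrm{Id}_V$ (as $V_2 \neq 0$). For the ``only if'' direction, again contrapositively: suppose $e \in (\End V)^{\mathfrak{s}}$ is a non-trivial idempotent. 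Then $\mathrm{im}\,e = eV$ and $\ker e = (\mathrm{Id}_V - e)V$ are $\mathfrak{s}$-submodules, since images and kernels of $\mathfrak{s}$-equivariant maps are $\mathfrak{s}$-stable; they are both nonzero because $e \neq 0, \mathrm{Id}_V$; and $V = eV \oplus (\mathrm{Id}_V - e)V$ because $e^2 = e$. Hence $V$ is $\mathfrak{s}$-decomposable.

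The argument is essentially formal, so there is no real obstacle; the only point requiring a moment's care is the identification of $(\End V)^{\mathfrak{s}}$ with $\End_{\mathfrak{s}}(V)$ via the correct formula for the induced $\mathfrak{g}$-action on $\End V$, together with the bookkeeping that the two trivial idempotents match the two trivial decompositions. Note that semisimplicity of $\mathfrak{g}$ plays no role.
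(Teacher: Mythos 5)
Your proof is correct and complete. The paper itself gives no proof of this lemma---it is stated as well-known, with a pointer to its use in Panyushev's paper---so there is no argument to compare against; what you have written is exactly the standard justification: the identification $(\End V)^{\mathfrak{s}} = \End_{\mathfrak{s}}(V)$ via the formula $(x\cdot f)(v) = x\cdot f(v) - f(x\cdot v)$, followed by the bijection between idempotents of the endomorphism ring and direct sum decompositions $V = eV \oplus (\mathrm{Id}_V - e)V$. Your closing observation that semisimplicity of $\mathfrak{g}$ is irrelevant here is also accurate.
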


We are now ready to state and prove our first  result
that  establishes necessary and sufficient conditions for a regular subalgebra to be $\lambda$-wide. The  proof of the theorem has similarity to  that of Theorem $4.2$ in  \cite{panyu}, which considers regular ad-nilpotent subalgebras. The theorem below is more general, and  pertains to $\lambda$-wide regular subalgebras, including those that are solvable, semisimple, or Levi decomposable.
Our proof, in addition, does not employ the consideration of gradings of modules, with the related geometric properties of $\mathcal{Q} \otimes_\mathbb{Z} \mathbb{R}$, 
in contrast to what is done in
 \cite{panyu}. 

\begin{theorem}\label{lwide}
Let $T$ be  a closed subset of $\Phi$, $\mathfrak{t}$  a subalgebra of $\mathfrak{h}$ containing $[\mathfrak{g}_\alpha, \mathfrak{g}_{-\alpha} ]$ for each $\alpha \in T^r$, and $\lambda \in \Lambda^+$. Then, $\mathfrak{s}_{T, \mathfrak{t}}$ is $\lambda$-wide if and only if
$\cw \cdot \lambda =  V(\lambda)$.   
\end{theorem}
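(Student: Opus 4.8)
The plan is to prove both directions using Lemma~\ref{idempotentlemma}, so the whole problem reduces to understanding the idempotents of $(\End~V(\lambda))^{\mathfrak{s}_{T,\mathfrak{t}}}$.

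\emph{The sufficient direction.} Suppose $\cw\cdot\lambda = V(\lambda)$; I want to show $V(\lambda)$ is $\mathfrak{s}_{T,\mathfrak{t}}$-indecomposable. By Lemma~\ref{idempotentlemma} it suffices to show $(\End~V(\lambda))^{\mathfrak{s}_{T,\mathfrak{t}}}$ has no nontrivial idempotent. First decompose $\End~V(\lambda) \cong V(\lambda)^*\otimes V(\lambda)$ into simple $\mathfrak{g}$-modules; the trivial summand $V(0)$ is spanned by $\id$. Any $\mathfrak{s}_{T,\mathfrak{t}}$-invariant endomorphism $f$ decomposes along this $\mathfrak{g}$-decomposition, and its $V(0)$-component is a scalar multiple of $\id$. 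I would write $f = c\cdot\id + f'$ where $f'$ lives in $\bigoplus_{\rho\neq 0} V(\rho)$, each $V(\rho)$ a submodule of $V(\lambda)^*\otimes V(\lambda)$. The key point: $f'$ is itself $\mathfrak{s}_{T,\mathfrak{t}}$-invariant (invariants of a module decompose along the $\mathfrak{g}$-decomposition only after noting $\mathfrak{s}_{T,\mathfrak{t}}\subseteq\mathfrak{g}$, so actually I should be careful — the $\mathfrak{s}_{T,\mathfrak{t}}$-invariants of $V(\lambda)^*\otimes V(\lambda)$ need not respect the $\mathfrak{g}$-decomposition; but $\mathfrak{t}\supseteq$ nothing forces $\mathfrak{h}$-invariance). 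Here is the cleaner route: an $\mathfrak{s}_{T,\mathfrak{t}}$-invariant $f$ in particular is fixed by $\mathfrak{t}$, hence $\mathfrak{h}$-invariant if $\mathfrak{t}=\mathfrak{h}$; in general only $\mathfrak{t}$-weight zero is forced. Instead, use Lemma~\ref{notpandn} directly on weights: write $f = \sum_\mu f_\mu$ in $\mathfrak{h}$-weight components of $\End~V(\lambda)$. For $f$ to be $\mathfrak{s}_{T,\mathfrak{t}}$-invariant, each $\mathfrak{g}_\beta$, $\beta\in T$, kills $f$, so $f_\mu\in (\End~V(\lambda))^{\mathfrak{s}_{T,\mathfrak{t}}}_\mu$ whenever $f_\mu\neq 0$ — wait, $\mathfrak{g}_\beta$ raises weight, so $\mathfrak{g}_\beta\cdot f_\mu$ has weight $\mu+\beta$; invariance of $f$ does not immediately give invariance of $f_\mu$ unless we also invoke $\mathfrak{t}$-invariance and chase weights. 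If $0\in\mathfrak{t}$-weights... Let me restructure: since $f$ is $\mathfrak{s}_{T,\mathfrak{t}}$-invariant and $\mathfrak{t}\subseteq\mathfrak{h}$ acts semisimply, $f$ lies in the $\mathfrak{t}$-zero-weight space, but an $\mathfrak{h}$-weight $\mu$ with $\mu|_{\mathfrak{t}}=0$ need not be $0$. This is exactly why Lemma~\ref{notpandn} is stated for $\mathfrak{s}_{T,\mathfrak{t}}$ and not $\mathfrak{s}_{T,\mathfrak{h}}$: it shows that if $f$ is an idempotent, $f^2=f$, and if $f_\mu\neq0$ with $\mu\neq 0$ appears, then so does the ``dual'' contribution $f_{-\mu}$ — more precisely, idempotency plus Lemma~\ref{compo} (the $\mu$-component of $f^2=f$ forces relations among the $f_\mu$), and then one argues, using Lemma~\ref{notpandn} and Lemma~\ref{nzero}, that only $f_0$ can survive and $f_0$ must be a multiple of $\id$. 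So $f = c\cdot\id$, and $f^2=f$ forces $c\in\{0,1\}$, i.e.\ no nontrivial idempotent, so $V(\lambda)$ is indecomposable, i.e.\ $\mathfrak{s}_{T,\mathfrak{t}}$ is $\lambda$-wide.

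\emph{The necessary direction.} Suppose $\cw\cdot\lambda \subsetneq V(\lambda)$; I want to produce a proper $\mathfrak{s}_{T,\mathfrak{t}}$-decomposition of $V(\lambda)$. Set $W := \cw\cdot\lambda$, a proper nonzero $\mathfrak{s}_{[T\cup-T],\mathfrak{h}}$-submodule (nonzero since $v_\lambda\in W$), hence also an $\mathfrak{s}_{T,\mathfrak{t}}$-submodule since $\mathfrak{s}_{T,\mathfrak{t}}\subseteq\mathfrak{s}_{[T\cup-T],\mathfrak{h}}$. The plan is to construct an $\mathfrak{s}_{T,\mathfrak{t}}$-invariant complement, or at least an $\mathfrak{s}_{T,\mathfrak{t}}$-invariant idempotent $f$ with image $W$. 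Since $\mathfrak{t}$ acts semisimply and $W$ is $\mathfrak{h}$-stable, $W$ is a sum of $\mathfrak{h}$-weight spaces, so the projection $\pi\colon V(\lambda)\to W$ along the complementary weight spaces is $\mathfrak{h}$-invariant, hence $\mathfrak{t}$-invariant. It remains to check $\pi$ commutes with $e_\beta$ for $\beta\in T$. For $\beta\in T^u$: $e_\beta$ maps $W$ into $W$ (by the definition of $W$ and Lemma~\ref{lem:deccl}/\ref{wideclosure} controlling which weight spaces lie in $W$ — need the statement that if $\lambda - \sum\beta_i\in\Pi(V(\lambda))$ with $-\beta_i\in\cw\cap\Phi^-$ then adding a positive root from $T^u$ keeps us in $W$ or gives $0$, which follows since raising in $V(\lambda)$ by a positive root and the $\cw$-closure structure interact as in the proof of Lemma~\ref{nzero}); and $e_\beta$ maps the complement to the complement — this is the content I expect to be the main obstacle: a weight vector $w$ outside $W$ could a priori have $e_\beta\cdot w$ landing inside $W$. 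The resolution should again be a closure argument: if $w$ has weight $\nu\notin\{\lambda-\sum\beta_i\}$ and $e_\beta\cdot w\in W_{\nu+\beta}$, one derives $\nu+\beta = \lambda - \sum\gamma_j$ with $-\gamma_j\in\cw\cap\Phi^-$, and one must rule out that $\nu$ itself is forced into $W$; here one uses that $V(\lambda)$ being generated by lowering from $v_\lambda$, together with Lemma~\ref{wideclosure}, pins down $\beta\in\cw$, contradiction if $\beta\in T\setminus\cw$ — but $T\subseteq\cw$ always, so this needs more care, and the actual argument likely mirrors Case~2 of Lemma~\ref{nzero} applied to strings. Once $\pi$ is shown $\mathfrak{s}_{T,\mathfrak{t}}$-invariant, $\pi$ is a nontrivial idempotent in $(\End~V(\lambda))^{\mathfrak{s}_{T,\mathfrak{t}}}$ (nontrivial: $0\neq W\neq V(\lambda)$), so by Lemma~\ref{idempotentlemma} $V(\lambda)$ is $\mathfrak{s}_{T,\mathfrak{t}}$-decomposable, i.e.\ $\mathfrak{s}_{T,\mathfrak{t}}$ is not $\lambda$-wide. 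Combining the two directions completes the proof.

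I expect the genuine difficulty to be concentrated in the necessary direction, specifically verifying that the $\mathfrak{h}$-weight-space projection onto $W = \cw\cdot\lambda$ commutes with the raising operators $e_\beta$ for $\beta\in T^u$; the sufficient direction is largely bookkeeping once Lemmas~\ref{notpandn}, \ref{nzero}, and \ref{compo} are in hand, reducing any invariant idempotent to a scalar multiple of the identity. An alternative, cleaner organization for the necessary direction avoids projections entirely: show directly that $W$ has an $\mathfrak{s}_{T,\mathfrak{t}}$-stable complement by taking the sum $W'$ of all $\mathfrak{h}$-weight spaces $V(\lambda)_\nu$ with $\nu\notin\Pi(\cw\cdot\lambda)$ and checking $V(\lambda) = W\oplus W'$ as $\mathfrak{s}_{T,\mathfrak{t}}$-modules using that no nonzero $\cw\cdot\lambda$-weight equals a complementary weight (since $W$ is a genuine weight-space sum) — this is the same computation packaged differently, and I would present whichever makes the closure argument shortest.
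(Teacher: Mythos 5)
Your overall framing (reduce everything to idempotents of $(\End~V(\lambda))^{\mathfrak{s}_{T,\mathfrak{t}}}$ via Lemma \ref{idempotentlemma}) matches the paper's for the sufficient direction, but both halves of your proposal have genuine gaps, and in the necessary direction your route would fail as written. The subspace $W=\cw\cdot\lambda$ is $\mathfrak{h}$-stable, so it is the direct sum of its intersections $W\cap V(\lambda)_\nu$, but these need not be the \emph{full} weight spaces: when a weight has multiplicity greater than one, $W$ can meet $V(\lambda)_\nu$ in a proper nonzero subspace. Consequently ``the projection along the complementary weight spaces'' is not well defined, and your alternative complement $W'=\bigoplus_{\nu\notin\Pi(W)}V(\lambda)_\nu$ does not satisfy $V(\lambda)=W\oplus W'$ in general; your premise that ``$W$ is a genuine weight-space sum'' is false. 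You also concede you cannot verify equivariance under $e_\beta$ for $\beta\in T^u$. The paper sidesteps all of this with one observation you never invoke: $\mathfrak{s}_{\cw,\mathfrak{h}}$ is a reductive subalgebra with radical contained in $\mathfrak{h}$, hence acts completely reducibly on $V(\lambda)$ ([\cite{dix}, Corollary 1.6.4]), so the proper nonzero submodule $W$ automatically admits an $\mathfrak{s}_{\cw,\mathfrak{h}}$-stable (hence $\mathfrak{s}_{T,\mathfrak{t}}$-stable) complement. No projection needs to be built by hand.

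In the sufficient direction you correctly assemble the ingredients (Lemmas \ref{notpandn}, \ref{nzero}, \ref{compo}), but the step you dismiss as ``largely bookkeeping'' is where the real work sits. Writing an invariant idempotent as $g=c\,\id_{V(\lambda)}+\sum_\mu g_\mu$ with every $\mu\neq 0$ (Lemma \ref{nzero}) and no pair $\mu,-\mu$ both occurring (Lemma \ref{notpandn}), comparison of weight-zero components gives $c\in\{0,1\}$ and then $\bigl(\sum_\mu g_\mu\bigr)\circ\bigl(\id_{V(\lambda)}\pm\sum_\mu g_\mu\bigr)=0$. This alone does not force $\sum_\mu g_\mu=0$; one must first prove that $\sum_\mu g_\mu$ is \emph{nilpotent}, so that the second factor is invertible. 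The paper supplies a separate argument: if some composition $g_{\nu_k}\circ\cdots\circ g_{\nu_1}$ were nonzero for every $k$, then by Lemma \ref{compo} each partial sum $\nu_1+\cdots+\nu_k$ would be a nonzero weight of $\End~V(\lambda)$, and finiteness of $\Pi(\End~V(\lambda))$ forces some consecutive subsum to vanish, which (again by Lemmas \ref{notpandn}, \ref{nzero}, \ref{compo}) annihilates the corresponding composition --- a contradiction. Your sketch asserts the conclusion (``only $f_0$ can survive'') without this argument, so the sufficient direction is incomplete as well.
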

\begin{proof}
\noindent $(\Longrightarrow)$ Assume  that $\mathfrak{s}_{T,\mathfrak{t}}$ is $\lambda$-wide. By way of contradiction, suppose that $\wc\cdot \lambda \subsetneq  V(\lambda)$.
Then,  $\wc \cdot \lambda$ is a proper $\mathfrak{s}_{[T \cup -T], \mathfrak{h}}$-submodule of $V(\lambda)$. And, we must have $\wc \subsetneq \Phi$, which follows from [\cite{humphreys}, Theorem 20.2]. Thus, $\mathfrak{s}_{\wc, \mathfrak{h}}$ is a proper subalgebra of $\mathfrak{g}$. Further, it is reductive with radical contained in $\mathfrak{h}$ (i.e., $\mathfrak{s}_{[T \cup -T], \mathfrak{h}} =    \mathfrak{s}_{[T \cup -T], \mathfrak{k}} \oplus \mathfrak{k}^\perp$, where $\mathfrak{k}$ is the subalgebra of $\mathfrak{h}$ generated
by $[\mathfrak{g}_\alpha, \mathfrak{g}_{-\alpha}]$ for all $\alpha \in [T\cup -T] \cap \Phi^+$, and $\mathfrak{k}^\perp =\{h\in \mathfrak{h}~|~ \kappa(h, h')=0, ~\text{for all}~ h'\in \mathfrak{k}\}$). 
This implies that $\mathfrak{s}_{\wc, \mathfrak{h}}$ is completely reducible on $V(\lambda)$ [\cite{dix}, Corollary 1.6.4],  so that
$V(\lambda)$ is $\mathfrak{s}_{\wc, \mathfrak{h}}$-decomposable with $\wc \cdot \lambda$  as a proper component in the decomposition. Hence, since $\mathfrak{s}_{T, \mathfrak{t}} \subseteq \mathfrak{s}_{\wc, \mathfrak{h}}$, then  $V(\lambda)$ is also $\mathfrak{s}_{T, \mathfrak{t}}$-decomposable, a contradiction. Therefore, it must be the case that $\wc \cdot \lambda = V(\lambda)$.

\vspace{2.2mm}

\noindent $(\Longleftarrow)$ Suppose $\cw\cdot \lambda = V(\lambda)$.    
We have the following isomorphism of $\mathfrak{g}$-modules, and decomposition into simple $\mathfrak{g}$-modules:
\begin{equation}
\End~V(\lambda) \cong V(\lambda)^* \otimes V(\lambda) = \bigoplus_{i=0}^k V(\lambda_i), 
\end{equation}
where each $\lambda_i \in \mathcal{Q} \cap \Lambda^+$, and  the multiplicity of $V(0)$ is  one in the weight space decomposition, by Schur's lemma. 
We may assume $\lambda_0=0$, and that $\lambda_i \neq 0$ for $i>0$.  Considering
$V(0) = V(0)^{\mathfrak{s}_{  T, \mathfrak{t} }}$, we then have 
\begin{equation}
( \End ~V(\lambda) )^{\mathfrak{s}_{T, \mathfrak{t}} }\cong  V(0) \oplus \Bigg(\bigoplus_{i=1}^k V(\lambda_i)^{\mathfrak{s}_{T,\mathfrak{t}}} \Bigg),
\end{equation}
where $V(0) = \mathbb{C} \cdot \text{Id}_{V(\lambda)}$. 
Hence, $ g\in (\End ~ V(\lambda))^{\mathfrak{s}_{T, \mathfrak{t}}}$ may be written as
  \begin{equation}\label{summ}
 g= c ~\text{Id}_{V(\lambda)} + \sum_{\mu} g_{\mu},
 \end{equation}
where $c\in \mathbb{C}$, and $g_{\mu} \in  \big(\bigoplus_{i=1}^k V(\lambda_i)^{\mathfrak{s}_{T,\mathfrak{t}}} \big)_{\mu}$.
By Lemma \ref{nzero}, we may assume  that all $\mu \neq 0$. By Lemma \ref{notpandn},  $\mu$ and $-\mu$ may not both occur as  (non-zero) weights in summands within Eq. \eqref{summ}.

Suppose further that $g$ is idempotent. Then
\begin{equation}
c ~\text{Id}_{V(\lambda)} + \sum_{\mu} g_{\mu} = c^2 ~\text{Id}_{V(\lambda)} + 2c \sum_{\mu} g_{\mu} +\Big(\sum_{\mu} g_{\mu} \Big)^2,
\end{equation}
so that 
\begin{equation}\label{ghh}
(c^2-c) ~\text{Id}_{V(\lambda)} + (2c-1) \Big(\sum_{\mu} g_{\mu}\Big) +\Big(\sum_{\mu} g_{\mu} \Big)^2=0.
\end{equation}
Since  it is always the case that $\mu\neq 0$, and that $\mu$ and $-\mu$ may not both occur, then the only term of weight zero in Eq. \eqref{ghh} is $\text{Id}_{V(\lambda)}$. Hence, it must be the case that $c=0$, or $c=1$. Thus
\begin{equation}\label{ghhh}
 \Big(\sum_{\mu} g_{\mu}\Big) \pm\Big(\sum_{\mu} g_{\mu} \Big)^2 = \Big(\sum_{\mu} g_{\mu}\Big) \Big(\text{Id}_{V(\lambda)}\pm \sum_{\mu} g_{\mu}\Big)   =0,
\end{equation}
where the sign $\pm$ depends on whether $c=0$ or $c=1$.

Next, we establish that the factor $\sum_{\mu} g_{\mu} $ in Eq. \eqref{ghhh} is nilpotent. 
By way of contradiction, suppose that $\sum_{\mu} g_{\mu} $ is not nilpotent. Then, there must exist an infinite sequence $\{\nu_1,\nu_2, \nu_3,...\}$ of nonzero  elements of $\Pi( \text{End } ~V(\lambda))\setminus \{0\}$, each occurring in $\sum_{\mu} g_{\mu} $, such that 
$ g_{\nu_k} \circ  g_{\nu_{k-1}} \circ \cdots \circ g_{\nu_1} \neq 0$ for all $k>0$.  This implies that $\nu_k+  \nu_{k-1}+  \cdots + \nu_1 \in \Pi( \text{End } ~V(\lambda)) \setminus \{0\}$ for each $k>0$ (considering Lemmas
 \ref{notpandn}, \ref{nzero}, and \ref{compo}).  
Since  $\Pi( \text{End } ~V(\lambda))$ is finite, there exist $n, m$, with $n<m$ such that $\nu_n+  \nu_{n-1}+  \cdots + \nu_1=\nu_m+  \nu_{m-1}+  \cdots + \nu_1$, so that
$\nu_m+  \nu_{m-1}+  \cdots + \nu_{n+1}=0$. This implies $g_{\nu_m}\circ  g_{\nu_{m-1}}\circ  \cdots \circ g_{\nu_{n+1}}=0$  (again considering Lemmas
 \ref{notpandn}, \ref{nzero}, and \ref{compo}). This in turn
implies $g_{\nu_m}\circ  g_{\nu_{m-1}}\circ  \cdots \circ g_{\nu_{n-1}} \circ g_{\nu_n} \circ \cdots \circ g_{\nu_1}=0$, a contradiction. Hence, it must be the case that
$\sum_{\mu} g_{\mu} $ is nilpotent.

Since $\sum_{\mu} g_{\mu} $ is nilpotent, then $\text{Id}_{V(\lambda)}\pm \sum_{\mu} g_{\mu}$ is invertible.  Eq. \eqref{ghhh} then implies that $\sum_{\mu} g_{\mu}=0$. 
Thus, $g=0$ or $g =\text{Id}_{V(\lambda)}$. Hence, $V(\lambda)$ is $\mathfrak{s}_{T, \mathfrak{t}}$-indecomposable by Lemma \ref{idempotentlemma}.
\end{proof}

We may define a subalgebra to be ${\mathit \lambda}$-{\it narrow} if the simple module of highest weight $\lambda$ has a proper decomposition when restricted to the subalgebra. Then, 
we may restate Theorem \ref{lwide} from the perspective of $\lambda$-narrow subalgebras.

\begin{corollary}
Let $T$ be  a closed subset of $\Phi$, and $\mathfrak{t}$  a subalgebra of $\mathfrak{h}$ containing $[\mathfrak{g}_\alpha, \mathfrak{g}_{-\alpha} ]$ for each $\alpha \in T^r$, and $\lambda \neq 0$.  Then, the regular subalgebra $\mathfrak{s}_{T, \mathfrak{t}}$ is $\lambda$-narrow if and only if $[T \cup -T]\cdot \lambda \subsetneq V(\lambda)$. 
\end{corollary}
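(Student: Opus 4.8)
The plan is to derive this corollary directly from Theorem \ref{lwide} by negation, since $\lambda$-narrow is, by the definition just introduced, the exact logical complement of $\lambda$-wide for a non-trivial weight $\lambda$. Concretely, a subalgebra is $\lambda$-narrow precisely when $V(\lambda)$ admits a proper decomposition on restriction, i.e. when $V(\lambda)$ is \emph{not} $\mathfrak{s}_{T,\mathfrak{t}}$-indecomposable, which is the negation of being $\lambda$-wide (here one uses $\lambda \neq 0$ so that $V(\lambda)$ is a non-trivial simple module and the two notions genuinely partition the possibilities).

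The key steps, in order, would be: first, observe that for $\lambda \neq 0$, ``$\mathfrak{s}_{T,\mathfrak{t}}$ is $\lambda$-narrow'' is by definition equivalent to ``$\mathfrak{s}_{T,\mathfrak{t}}$ is not $\lambda$-wide''. Second, apply Theorem \ref{lwide}, which asserts that $\mathfrak{s}_{T,\mathfrak{t}}$ is $\lambda$-wide if and only if $[T\cup -T]\cdot \lambda = V(\lambda)$. Third, negate the biconditional: $\mathfrak{s}_{T,\mathfrak{t}}$ is $\lambda$-narrow if and only if $[T\cup -T]\cdot \lambda \neq V(\lambda)$. Finally, note that $[T\cup -T]\cdot\lambda$ is always a submodule of $V(\lambda)$ containing $v_\lambda$ (as remarked before Lemma \ref{gzero}), so $[T\cup-T]\cdot\lambda \subseteq V(\lambda)$ always holds; hence $[T\cup-T]\cdot\lambda \neq V(\lambda)$ is the same as the proper inclusion $[T\cup -T]\cdot\lambda \subsetneq V(\lambda)$, giving the stated form.

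There is essentially no obstacle here: the only point requiring a word of care is the implicit claim that the definitions of $\lambda$-wide and $\lambda$-narrow are complementary when $\lambda \neq 0$. This holds because $\lambda$-wide means $V(\lambda)$ remains indecomposable on restriction, $\lambda$-narrow means $V(\lambda)$ has a proper decomposition on restriction, and a module either is or is not decomposable — there is no third case — while the hypothesis $\lambda \neq 0$ ensures $V(\lambda)$ is not the (trivial, necessarily indecomposable) one-dimensional module, so the dichotomy is the genuine one intended by the definitions. Thus the proof is a one-line application of Theorem \ref{lwide}.

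\begin{proof}
Since $\lambda \neq 0$, the module $V(\lambda)$ is a non-trivial simple $\mathfrak{g}$-module, and by definition $\mathfrak{s}_{T,\mathfrak{t}}$ is $\lambda$-narrow exactly when $V(\lambda)$ admits a proper decomposition upon restriction to $\mathfrak{s}_{T,\mathfrak{t}}$, that is, exactly when $\mathfrak{s}_{T,\mathfrak{t}}$ is \emph{not} $\lambda$-wide. By Theorem \ref{lwide}, $\mathfrak{s}_{T,\mathfrak{t}}$ is $\lambda$-wide if and only if $[T\cup -T]\cdot\lambda = V(\lambda)$. Negating, $\mathfrak{s}_{T,\mathfrak{t}}$ is $\lambda$-narrow if and only if $[T\cup -T]\cdot\lambda \neq V(\lambda)$. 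Finally, $[T\cup -T]\cdot\lambda$ is an $\mathfrak{s}_{[T\cup-T],\mathfrak{h}}$-submodule of $V(\lambda)$ (containing $v_\lambda$), so $[T\cup -T]\cdot\lambda \subseteq V(\lambda)$ always, and $[T\cup -T]\cdot\lambda \neq V(\lambda)$ is equivalent to $[T\cup -T]\cdot\lambda \subsetneq V(\lambda)$. This proves the claim.
\end{proof}
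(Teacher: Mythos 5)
Your proposal is correct and matches the paper's treatment: the paper offers no separate proof, presenting the corollary as the direct restatement (negation) of Theorem \ref{lwide}, which is exactly the argument you give. Your added remarks on why $\lambda$-wide and $\lambda$-narrow are complementary for $\lambda \neq 0$ and why $\neq$ upgrades to $\subsetneq$ are sound and merely make explicit what the paper leaves implicit.
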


Next, we show that Theorem \ref{lwide} implies necessary and sufficient conditions for a regular subalgebra (solvable, semisimple, or Levi decomposable) of a semisimple Lie algebra  to be wide. In doing so, 
we recover the known result for essentially all regular solvable subalgebras to be wide in \cite{panyu}.

\begin{corollary}\label{lwideb}
Let $T$ be  a closed subset of $\Phi$, and $\mathfrak{t}$  a subalgebra of $\mathfrak{h}$ containing $[\mathfrak{g}_\alpha, \mathfrak{g}_{-\alpha} ]$ for each $\alpha \in T^r$.  Then, $\mathfrak{s}_{T, \mathfrak{t}}$ is wide if and only if $[T \cup -T] =\Phi$. 
\end{corollary}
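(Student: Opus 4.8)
The plan is to deduce Corollary~\ref{lwideb} from Theorem~\ref{lwide} by quantifying over all dominant weights $\lambda$. By definition, $\mathfrak{s}_{T,\mathfrak{t}}$ is wide if and only if it is $\lambda$-wide for every $\lambda \in \Lambda^+$, which by Theorem~\ref{lwide} happens if and only if $\cw \cdot \lambda = V(\lambda)$ for every $\lambda \in \Lambda^+$. So it suffices to prove the equivalence: $\cw = \Phi$ \emph{iff} $\cw \cdot \lambda = V(\lambda)$ for all $\lambda \in \Lambda^+$.

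For the forward direction, suppose $\cw = \Phi$. Then $\mathfrak{s}_{\cw,\mathfrak{h}} = \mathfrak{g}$, and $\cw \cdot \lambda$ is by construction a nonzero $\mathfrak{g}$-submodule of the simple module $V(\lambda)$, hence equals $V(\lambda)$. (Concretely, $\cw \cdot \lambda$ contains $v_\lambda$ and is spanned by applying all negative root vectors $e_{-\beta}$, $\beta \in \Phi^+$; since $V(\lambda)$ is generated from $v_\lambda$ by the $e_{-\alpha_i}$ with $\alpha_i$ simple, and simple roots lie in $\cw$, this span is all of $V(\lambda)$ by the standard PBW / highest-weight argument as in \cite{humphreys}, Theorem 20.2.)

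For the converse, suppose $\cw \neq \Phi$, i.e.\ $\cw \subsetneq \Phi$. I would exhibit a single dominant weight $\lambda$ for which $\cw \cdot \lambda \subsetneq V(\lambda)$; the adjoint representation is the natural candidate, since $\cw \subsetneq \Phi$ means precisely that some root space $\mathfrak{g}_\alpha$ with $\alpha \in \Phi \setminus \cw$ is missing. Pick $\alpha \in \Phi^+ \setminus \cw$ of maximal height among roots not in $\cw$, take $\lambda$ to be the highest root $\theta$ (so $V(\lambda) = \mathfrak{g}$ as the adjoint module, assuming $\mathfrak{g}$ simple; in the semisimple case work in the simple ideal containing $\alpha$ and take $\lambda$ supported there). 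Then $\mathfrak{g}_\alpha \subseteq \mathfrak{g}$ is a weight space of weight $\alpha$, and I claim it is not contained in $\cw \cdot \theta$: any element of $\cw \cdot \theta$ of weight $\alpha$ would be a combination of vectors $e_{-\beta_1}\cdots e_{-\beta_m}\cdot e_\theta$ with $-\beta_j \in \cw \cap \Phi^-$, giving $\alpha = \theta - \beta_1 - \cdots - \beta_m$, hence $\theta - \alpha = \beta_1 + \cdots + \beta_m$ is a sum of elements of $\cw$; combined with $\theta \in \cw$ (the highest root always lies in any symmetric closed set meeting its component — or rather, one argues directly that $\alpha = \theta - (\theta - \alpha)$ would force $\alpha \in \cw$ by closure, Lemma~\ref{wideclosure}), this contradicts $\alpha \notin \cw$. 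Care is needed to phrase this so the closure argument applies — the clean statement is: if $\alpha \in \Pi(V(\lambda)) = \Pi(\mathfrak{g})$ occurs in $\cw\cdot\lambda$ with $\lambda = \theta$, and $\alpha$ is a root, then writing $\alpha = \theta - \sum \beta_j$ and using that $\theta$ together with $-\beta_j \in \cw$ closes up to give $\alpha \in \cw$ via Lemma~\ref{wideclosure} applied to $\{\theta, -\beta_1, \dots, -\beta_m\}$.

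The main obstacle I anticipate is the bookkeeping in the converse for the \emph{semisimple} (non-simple) case and the degenerate situation where $V(\theta)$ as adjoint module is not simple across ideals: one must restrict attention to the simple ideal $\mathfrak{g}_i$ of $\mathfrak{g}$ whose root subsystem $\Phi_i$ is not contained in $\cw$, choose $\lambda$ to be the highest root of $\Phi_i$ (a dominant weight of $\mathfrak{g}$), and verify $V(\lambda) \cong \mathfrak{g}_i$ as $\mathfrak{g}$-modules so that the weight-$\alpha$ subspace has the right interpretation. The rest is a direct application of Theorem~\ref{lwide} and Lemma~\ref{wideclosure}, so the proof should be short once this reduction is set up correctly.
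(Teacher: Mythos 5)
Your reduction of the corollary to Theorem \ref{lwide}, i.e.\ to the equivalence ``$\cw=\Phi$ iff $\cw\cdot\lambda=V(\lambda)$ for all $\lambda\in\Lambda^+$,'' is exactly right, and your forward direction (simple roots lie in $\cw$, so the usual highest-weight generation argument of [\cite{humphreys}, Theorem 20.2] gives $\cw\cdot\lambda=V(\lambda)$) is the paper's argument. The gap is in the converse. You take $\lambda=\theta$ and claim that for $\alpha\in\Phi^+\setminus\cw$ the weight space $\mathfrak{g}_\alpha$ is not contained in $\cw\cdot\theta$, by writing $\alpha=\theta-\beta_1-\cdots-\beta_m$ and invoking Lemma~\ref{wideclosure} on $\{\theta,-\beta_1,\dots,-\beta_m\}$. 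But that lemma requires \emph{every} summand to lie in the closed set, and $\theta$ need not lie in $\cw$; your parenthetical claim that the highest root lies in every symmetric closed subset meeting its component is false. Concretely, in $\mathfrak{sl}_3$ take $T=\{\pm\alpha_1\}$, so $\cw=\{\pm\alpha_1\}$ and $\theta=\alpha_1+\alpha_2\notin\cw$. Then the positive root of maximal height outside $\cw$ is $\theta$ itself, and $\mathfrak{g}_\theta=\mathbb{C}v_\theta\subseteq\cw\cdot\theta$ trivially; for the other choice $\alpha=\alpha_2\notin\cw$, the vector $[e_{-\alpha_1},e_\theta]$ is a nonzero element of $\mathfrak{g}_{\alpha_2}$ lying in $\cw\cdot\theta$. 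So the weight space you propose as a witness is in fact contained in $\cw\cdot\theta$; here $\cw\cdot\theta=\mathbb{C}e_\theta\oplus\mathbb{C}e_{\alpha_2}$ is indeed proper, but because a weight such as $-\alpha_2$ is missing, not the one you name. Your argument only goes through in the special case $\theta\in\cw$.

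The paper sidesteps this by choosing the test module differently: since $\cw\subsetneq\Phi$ forces some \emph{simple} root $\alpha_i\notin\cw$ (a symmetric closed set containing all of $\Delta$ is all of $\Phi$), one takes $\lambda=\lambda_i$, the fundamental weight dual to $\alpha_i$. Then $\lambda_i-\alpha_i\in\Pi(V(\lambda_i))$, and if it lay in $\Pi(\cw\cdot\lambda_i)$ one would get $\alpha_i=\beta_1+\cdots+\beta_k$ with all $\beta_j\in\cw$; now Lemma~\ref{wideclosure} applies with no extraneous term and yields $\alpha_i\in\cw$, a contradiction. (The paper then concludes decomposability via complete reducibility of the proper reductive subalgebra $\mathfrak{s}_{\wc,\mathfrak{h}}$ rather than by citing the reverse implication of Theorem~\ref{lwide}, but your route through the theorem is equally valid.) Your instinct that the adjoint representation suffices as a test module is sound --- that is the content of Corollary~\ref{best} --- but the way to see that $V(\theta)$ decomposes is not by exhibiting a root weight missing from $\cw\cdot\theta$; it is by observing that $\mathfrak{s}_{\wc,\mathfrak{h}}$ is a proper reductive subalgebra and hence a proper direct summand of the adjoint module. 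To repair your write-up with minimal change, replace $(\theta,\alpha)$ by $(\lambda_i,\alpha_i)$ as above.
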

\begin{proof} 
\noindent $(\Longrightarrow)$ Suppose the regular  subalgebra $\mathfrak{s}_{T, \mathfrak{t}}$ is wide. 
By way of contradiction, suppose $[T \cup -T] \subsetneq \Phi$.  Then, there exists $\alpha_i \in \Delta\setminus \cw$. Consider $V(\lambda_i)$, where 
$\lambda_i$ is the fundamental dominant weight (relative to $\Delta$) that is dual to $\alpha_i$. Then, for $e_{-\alpha_i} \in \mathfrak{g}_{-\alpha_i}$,  we have
$e_{-\alpha_i} \cdot v_{\lambda_i} \in V(\lambda_i) \setminus \{0\}$. 

Hence, $\lambda_i -\alpha_i \in \Pi( V(\lambda_i))$. We claim that $\lambda_i -\alpha_i \notin \Pi( \cw \cdot \lambda_i)$. By way of contradiction,
suppose $\lambda_i -\alpha_i \in \Pi( \cw \cdot \lambda_i)$, then $\alpha_i = \beta_1+\cdots +\beta_n \in \Phi$ for some
$\beta_1,..., \beta_n \in \cw$. By Lemma \ref{wideclosure}, this implies $\alpha_i \in \cw$, a contradiction. Hence, it must be the case that $\lambda_i -\alpha_i \notin \Pi( \cw \cdot \lambda_i)$. 

Since $\lambda_i -\alpha_i \in \Pi( V(\lambda_i))$, but  $\lambda_i -\alpha_i \notin \Pi( \cw \cdot \lambda_i)$, then $\cw \cdot V(\lambda_i)$ is a proper
$\sttt$-submodule of $V(\lambda_i)$. Hence, since $\mathfrak{s}_{[T \cup -T], \mathfrak{h}}$ is a  reductive subalgebra of $\mathfrak{g}$ with
radical contained in $\mathfrak{h}$,  $V(\lambda_i)$ has a non-trivial $\sttt$-decomposition  [\cite{dix}, Corollary 1.6.4].  Since, $\mathfrak{s}_{T, \mathfrak{t}} \subseteq \mathfrak{s}_{[T\cup -T], \mathfrak{h}}$,
this is also a non-trivial decomposition with respect to $\stt$. This, however, contradicts our initial assumption that $\stt$ is wide. Hence, it must be the case that 
$\cw=\Phi$.

\vspace{2.2mm}

\noindent $(\Longleftarrow)$ Suppose  $[T \cup -T]=\Phi$.
Then, for all $\lambda \in \Lambda^+$, we have $\wc \cdot \lambda =V(\lambda)$, which follows from [\cite{humphreys}, Theorem 20.2b]. Hence, by Theorem \ref{lwide}, $\mathfrak{s}_{T, \mathfrak{t}}$ is 
$\lambda$-wide for all $\lambda\in \Lambda^+$. Hence, $\mathfrak{s}_{T, \mathfrak{t}}$ is wide.
\end{proof}

 In our final result of this section that follows, we assume that $\mathfrak{g}$ is simple so that the adjoint representation is simple.  We show that establishing whether or not a regular subalgebra  of  a simple Lie algebra is wide does not require consideration of all simple modules.  It is sufficient to only consider the adjoint representation.

\begin{corollary}\label{best} 
The regular subalgebra $\mathfrak{s}_{T, \mathfrak{t}}$ of  a simple Lie algebra $\mathfrak{g}$ is  wide if and only if the adjoint representation of $\mathfrak{g}$ 
is indecomposable when restricted to $\mathfrak{s}_{T, \mathfrak{t}}$. 
\end{corollary}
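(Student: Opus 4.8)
The plan is to deduce Corollary~\ref{best} from Corollary~\ref{lwideb} by showing that, for a \emph{simple} Lie algebra $\mathfrak{g}$, the single condition ``the adjoint module is $\mathfrak{s}_{T,\mathfrak{t}}$-indecomposable'' already forces $[T\cup-T]=\Phi$, which is the necessary and sufficient condition for wideness. One direction is immediate: if $\mathfrak{s}_{T,\mathfrak{t}}$ is wide, then by definition every simple module stays indecomposable on restriction, and since $\mathfrak{g}$ is simple its adjoint representation is a simple module, so it is indecomposable. The content is the converse.

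For the converse, I would argue the contrapositive: assume $[T\cup-T]\subsetneq\Phi$ and produce a proper $\mathfrak{s}_{T,\mathfrak{t}}$-decomposition of $\mathfrak{g}$ under the adjoint action. The natural candidate submodule is $[T\cup-T]\cdot\theta$, where $\theta$ is the highest root of $\Phi$ and $\mathfrak{g}=V(\theta)$ is the adjoint module. Since $[T\cup-T]\subsetneq\Phi$, pick $\alpha_i\in\Delta\setminus[T\cup-T]$; exactly as in the proof of Corollary~\ref{lwideb}, one shows $\theta-\alpha_i$ (or some weight reachable from $\theta$ but not via roots in $[T\cup-T]$, using that $\alpha_i\in\Pi(\mathrm{ad}\,\mathfrak{g})$ as a root) is a weight of $\mathfrak{g}$ not lying in $[T\cup-T]\cdot\theta$, the obstruction being Lemma~\ref{wideclosure}: if $\alpha_i=\beta_1+\cdots+\beta_n$ with $\beta_j\in[T\cup-T]$ then $\alpha_i\in[T\cup-T]$, a contradiction. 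Hence $[T\cup-T]\cdot\theta$ is a \emph{proper} $\mathfrak{s}_{[T\cup-T],\mathfrak{h}}$-submodule of $\mathfrak{g}$. Because $\mathfrak{s}_{[T\cup-T],\mathfrak{h}}$ is reductive with radical inside $\mathfrak{h}$, it is completely reducible on $\mathfrak{g}$ (\cite{dix}, Corollary~1.6.4), so $\mathfrak{g}$ decomposes with $[T\cup-T]\cdot\theta$ as a proper summand; restricting along $\mathfrak{s}_{T,\mathfrak{t}}\subseteq\mathfrak{s}_{[T\cup-T],\mathfrak{h}}$ gives a proper $\mathfrak{s}_{T,\mathfrak{t}}$-decomposition, i.e.\ the adjoint module is $\mathfrak{s}_{T,\mathfrak{t}}$-decomposable. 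By contraposition, adjoint-indecomposability implies $[T\cup-T]=\Phi$, hence wideness by Corollary~\ref{lwideb}.

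The one genuine subtlety — and the step I expect to need the most care — is the claim that $\theta-\alpha_i$ is actually a weight of the adjoint module, i.e.\ that $\alpha_i$ appears in a chain from $\theta$ realizable inside $\Pi(\mathrm{ad}\,\mathfrak{g})$. Here $\Pi(\mathrm{ad}\,\mathfrak{g})=\Phi\cup\{0\}$, and $\langle\theta,\alpha_i\rangle\ge 0$ for all simple $\alpha_i$ with $\langle\theta,\alpha_i\rangle>0$ for at least one $i$; more robustly, one can simply invoke \cite{humphreys}, Theorem~20.2, exactly as Theorem~\ref{lwide}'s proof does: $[T\cup-T]\cdot\theta=\mathfrak{g}$ would force $[T\cup-T]=\Phi$. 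So in fact the cleanest route is: $[T\cup-T]\subsetneq\Phi$ $\Longrightarrow$ $[T\cup-T]\cdot\theta\subsetneq\mathfrak{g}$ (the same implication used inside Theorem~\ref{lwide}'s forward direction), and then complete reducibility of the reductive algebra $\mathfrak{s}_{[T\cup-T],\mathfrak{h}}$ finishes it. This sidesteps any explicit root-chain bookkeeping and routes everything through results already invoked in the section.

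Thus the proof is essentially a two-line assembly: $(\Rightarrow)$ is trivial from simplicity of $\mathrm{ad}\,\mathfrak{g}$; $(\Leftarrow)$ follows because $[T\cup-T]\subsetneq\Phi$ yields, via \cite{humphreys}, Theorem~20.2 and \cite{dix}, Corollary~1.6.4, a proper $\mathfrak{s}_{T,\mathfrak{t}}$-submodule of the adjoint module, after which Corollary~\ref{lwideb} converts $[T\cup-T]=\Phi$ into wideness. The only thing worth stating carefully in the writeup is that $\mathfrak{g}$ being simple is exactly what makes $\mathrm{ad}\,\mathfrak{g}$ a \emph{simple} (hence highest-weight) module, so that Theorem~\ref{lwide} applies to it with $\lambda=\theta$.
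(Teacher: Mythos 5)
Your overall architecture (reduce everything to Corollary \ref{lwideb}; the forward direction is immediate because simplicity of $\mathfrak{g}$ makes the adjoint module simple) matches the paper's, but the converse has a genuine gap at exactly the step you flag as the ``one genuine subtlety,'' and neither of your two proposed fixes closes it. Your primary route needs $\theta-\alpha_i$ to be a weight of the adjoint module for the chosen $\alpha_i\in\Delta\setminus[T\cup-T]$; but $\theta-\alpha_i\in\Phi\cup\{0\}$ only when $\langle\theta,\alpha_i\rangle>0$, which fails for most simple roots in most types (in $A_3$, for instance, $\theta-\alpha_2=\alpha_1+\alpha_3$ is neither a root nor zero), and you have no control over which simple roots happen to lie outside $[T\cup-T]$. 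This is precisely why the proof of Corollary \ref{lwideb} works with the fundamental weight $\lambda_i$ dual to $\alpha_i$, where $\langle\lambda_i,\alpha_i\rangle=1$ guarantees $\lambda_i-\alpha_i$ is a weight; that choice is not available once you fix $\lambda=\theta$. Your ``more robust'' fallback cites [\cite{humphreys}, Theorem 20.2] for the implication that $[T\cup-T]\cdot\theta=\mathfrak{g}$ forces $[T\cup-T]=\Phi$, but that theorem gives exactly the opposite implication ($[T\cup-T]=\Phi$ implies $[T\cup-T]\cdot\theta=\mathfrak{g}$), which is how it is used in Theorem \ref{lwide}; the direction you need, $[T\cup-T]\subsetneq\Phi\Rightarrow[T\cup-T]\cdot\theta\subsetneq\mathfrak{g}$, is (via Theorem \ref{lwide}) essentially equivalent to the statement being proved, so invoking it there is circular.

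The needed claim is true, but the clean way to get it --- and the way the paper proceeds --- is to drop $[T\cup-T]\cdot\theta$ entirely and use a different proper submodule: $\mathfrak{s}_{[T\cup-T],\mathfrak{h}}$ itself. A subalgebra is automatically an $\mathfrak{s}_{[T\cup-T],\mathfrak{h}}$-submodule of $\mathfrak{g}$ under the adjoint action, and it is proper precisely because $[T\cup-T]\subsetneq\Phi$. Since $\mathfrak{s}_{[T\cup-T],\mathfrak{h}}$ is reductive with radical in $\mathfrak{h}$, [\cite{dix}, Corollary 1.6.4] yields a complementary $\mathfrak{s}_{[T\cup-T],\mathfrak{h}}$-submodule (concretely one can take $\bigoplus_{\alpha\in\Phi\setminus[T\cup-T]}\mathfrak{g}_\alpha$, which is a submodule by the closure and symmetry of $[T\cup-T]$), and restricting along $\mathfrak{s}_{T,\mathfrak{t}}\subseteq\mathfrak{s}_{[T\cup-T],\mathfrak{h}}$ gives the desired proper decomposition of the adjoint module. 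Incidentally, this same decomposition repairs your route as well: $[T\cup-T]\cdot\theta$ is generated from $e_\theta$ by $\mathfrak{s}_{[T\cup-T],\mathfrak{h}}$ and hence lies inside whichever summand contains $e_\theta$, so it is proper; but once one has the decomposition in hand, the detour through Theorem \ref{lwide} with $\lambda=\theta$ is unnecessary.
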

\begin{proof}
\noindent $(\Longrightarrow)$ If $\mathfrak{s}_{T, \mathfrak{t}}$ is wide in $\mathfrak{g}$, then all simple modules of $\mathfrak{g}$ remain indecomposable when
restricted to $\mathfrak{s}_{T, \mathfrak{t}}$, including the adjoint representation.

\noindent $(\Longleftarrow)$ Assume that the adjoint representation of $\mathfrak{g}$ 
is indecomposable when restricted to $\mathfrak{s}_{T, \mathfrak{t}}$. By way of contradiction, suppose that $\mathfrak{s}_{T, \mathfrak{t}}$ is not wide, so that
$[T \cup -T] \subsetneq \Phi$ (Corollary \ref{lwideb}). Then, the regular reductive subalgebra $\mathfrak{s}_{[T\cup-T], \mathfrak{h}}$, with radical contained in $\mathfrak{h}$,  is a proper subalgebra of $\mathfrak{g}$.  By [\cite{dix}, Corollary 1.6.4], the adjoint representation is completely reducible with respect to 
$\mathfrak{s}_{[T\cup-T], \mathfrak{h}}$. Hence, since $\mathfrak{s}_{[T\cup-T], \mathfrak{h}}$ is a proper $\mathfrak{s}_{[T\cup-T], \mathfrak{h}}$-submodule of the 
adjoint representation of $\mathfrak{g}$, the adjoint representation of $\mathfrak{g}$ has a non-trivial $\mathfrak{s}_{[T\cup-T], \mathfrak{h}}$-decomposition. Since 
$\mathfrak{s}_{T, \mathfrak{t}} \subseteq \mathfrak{s}_{[T\cup-T], \mathfrak{h}}$, this is also a non-trivial $\mathfrak{s}_{T, \mathfrak{t}}$-decomposition, a contradiction.
Hence, it must be the case that $\mathfrak{s}_{T, \mathfrak{t}}$ is wide.
\end{proof}

\section{Regular subalgebras of $\mathfrak{sl}_{n+1}$}\label{narrowwidesection}

In this section, we show that a regular subalgebra of $\mathfrak{sl}_{n+1}$ must be narrow or wide. This is a property that, as discussed above in Section \ref{intro}, does not hold in general for non-regular subalgebras of $\mathfrak{sl}_{n+1}$; a non-regular subalgebra of $\mathfrak{sl}_{n+1}$ may be neither narrow nor wide.
We first introduce  convenient bases for $\mathfrak{sl}_{n+1}$-modules created by Feigin,  Fourier, and Littelmann \cite{feigin}.

The special linear algebra   $\mathfrak{sl}_{n+1}$ is a simple Lie algebra of rank $n$. It has simple roots $\Delta= \{ \alpha_1,..., \alpha_n\}$, and  positive roots 
\begin{equation}
\Phi^+= \{ \alpha_{p, q} \coloneqq \alpha_p+\cdots + \alpha_q~|~ 1 \leq p \leq q \leq n\}.
\end{equation}
Note that $\alpha_i =\alpha_{i,i}$. For ease of notation, define $f_{\beta} \coloneqq e_{-\beta}$ for $\beta \in \Phi^+$.

Before we describe a basis of simple $\mathfrak{sl}_{n+1}$-modules, we first introduce  Dyck paths defined in \cite{feigin}. A {\it Dyck path} is  a sequence of positive roots
\begin{equation}
\mathbf{p} = ( \beta(0), \beta(1), ... , \beta(k)), ~ k\geq 0,
\end{equation} 
satisfying the following conditions:
\begin{enumerate}[i.]
\item If $k=0$, then $\mathbf{p}$ is of the form $\mathbf{p}=(\alpha_i)$ for some simple root $\alpha_i \in \Delta$.
\item If $k\geq 1$, then
\begin{enumerate}[a.]
\item $\beta(0) =\alpha_i$ and $\beta(k) =\alpha_j$ for some $1 \leq i< j \leq n$; and
\item if $\beta(s) = \alpha_{p,q}$, then $\beta(s+1) = \alpha_{p+1,q}$, or $\beta(s+1) = \alpha_{p,q+1}$. 
\end{enumerate}
\end{enumerate}

For any multi-exponent $\mathbf{s} =(s_\beta)_{\beta \in \Phi^+}$, $s_\beta \in \mathbb{N}$, fix an arbitrary order of factors $f_\beta$ in the product
$\Pi_{\beta \in \Phi^+} f^{s_\beta}_\beta$. Let $f^{\mathbf{s}}$ be the order product
\begin{equation}
f^{\mathbf{s}} = \Pi_{\beta \in \Phi^+} f^{s_\beta}_\beta,
\end{equation}
in the universal enveloping algebra of the negative root space of $\mathfrak{sl}_{n+1}$.  We are now ready to present bases for simple $\mathfrak{sl}_{n+1}$-modules.

\begin{theorem} \cite{feigin}\label{fflv}
Let $\lambda= m_1 \lambda_1+\cdots + m_n \lambda_n$ be a dominant $\mathfrak{sl}_{n+1}$-weight and let $S(\lambda)$ be the set of all multi-exponents $\mathbf{s}= (s_\beta)_{\beta \in \Phi^+}$, $s_\beta \in \mathbb{N}$,
such that for all  Dyck paths $\mathbf{p} = ( \beta(0), \beta(1), ... , \beta(k))$ 
\begin{equation}\label{ff1}
s_{\beta(0)}+\cdots +s_{\beta(k)} \leq m_i+\cdots +m_j, 
\end{equation}
where $\beta(0)=\alpha_i$ and $\beta(k)=\alpha_j$. 
Then, the set $f^{\mathbf{s}} v_\lambda$ with $\mathbf{s} \in S(\lambda)$ forms a basis of $V(\lambda)$, which we'll  denote $\mathcal{B}_\lambda$.
\end{theorem}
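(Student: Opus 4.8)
The plan is to follow the strategy of \cite{feigin}: prove separately that the monomials $\{f^{\mathbf{s}} v_\lambda : \mathbf{s}\in S(\lambda)\}$ span $V(\lambda)$ and that $\#S(\lambda) = \dim V(\lambda)$. These two facts suffice, since a spanning set forces $\dim V(\lambda) \le \#S(\lambda)$, and then equality of the two numbers forces the spanning vectors to be distinct and linearly independent, hence a basis $\mathcal{B}_\lambda$.

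For spanning I would pass to the associated graded module $V(\lambda)^{a}$ for the PBW filtration on $U(\mathfrak{n}^{-})$, where $\mathfrak{n}^{-}$ is the negative root space; this is a cyclic module over the symmetric algebra $S(\mathfrak{n}^{-})$, and a spanning set for $V(\lambda)^{a}$ by monomials $f^{\mathbf{s}}v_\lambda$ lifts to one for $V(\lambda)$. Writing $V(\lambda)^{a} = S(\mathfrak{n}^{-})/I_\lambda$, the crux is to exhibit enough explicit members of the ideal $I_\lambda$: the $\mathfrak{sl}_2$-relations $f_\beta^{\langle\lambda,\beta\rangle+1}\in I_\lambda$ for every $\beta\in\Phi^{+}$, together with the further relations obtained from them by repeated application of the raising operators $e_\alpha$ and truncation in the graded algebra. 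With a monomial order on $S(\mathfrak{n}^{-})$ adapted to the Dyck-path combinatorics, I would then run a straightening argument: any monomial $f^{\mathbf{s}}$ with $\mathbf{s}\notin S(\lambda)$ violates some inequality $s_{\beta(0)}+\cdots+s_{\beta(k)}\le m_i+\cdots+m_j$ along a Dyck path from $\alpha_i$ to $\alpha_j$, and one checks its leading term is divisible by the leading term of one of the above ideal elements, so $f^{\mathbf{s}}v_\lambda$ is congruent modulo $I_\lambda$ to a combination of strictly smaller monomials; induction on the order terminates at monomials indexed by $S(\lambda)$.

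For the count I would realize $S(\lambda)$ as the set of lattice points of the rational polytope $P(\lambda)\subset \mathbb{R}_{\ge 0}^{\Phi^{+}}$ cut out by the Dyck-path inequalities, whose right-hand sides depend linearly on $\lambda$. The key geometric inputs are that $P(\lambda+\mu) = P(\lambda) + P(\mu)$ (Minkowski sum) and that $P(\lambda)$ is normal (has the integer decomposition property), so that $S(\lambda+\mu) = S(\lambda) + S(\mu)$ as lattice-point sets. One then verifies the base cases $\lambda = \lambda_i$ directly: for a fundamental weight the Dyck-path system collapses to a transparent condition whose solutions biject with the $i$-element subsets of $\{1,\dots,n+1\}$, giving $\#S(\lambda_i) = \binom{n+1}{i} = \dim V(\lambda_i)$. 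Finally I would induct on the rank: projecting away the coordinates $s_{\beta}$ indexed by the roots $\alpha_{p,n}$ exhibits $P(\lambda)$ as fibered over the analogous $A_{n-1}$-polytope attached to $\bar\lambda = m_1\lambda_1+\cdots+m_{n-1}\lambda_{n-1}$, with fibers whose lattice-point counts reproduce the multiplicities in the branching $\mathfrak{sl}_{n+1}\downarrow\mathfrak{sl}_{n}$; since $\#S(\lambda)$ and $\dim V(\lambda)$ then satisfy the same recursion with the same base cases, they coincide.

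The main obstacle is the counting step. The spanning argument is essentially formal once the correct family of ideal relations and a compatible monomial order are identified, whereas proving $\#S(\lambda) = \dim V(\lambda)$ requires the genuinely combinatorial Minkowski-additivity and normality of the Dyck polytope together with the fibration/branching induction; this is where the type-$A$ structure is used most heavily and where the bulk of the work in \cite{feigin} resides.
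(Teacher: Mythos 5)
This theorem is not proved in the paper at all: it is quoted from Feigin--Fourier--Littelmann \cite{feigin} and used as a black box in Section~\ref{narrowwidesection}, so there is no in-paper argument to compare yours against; the only meaningful comparison is with the proof in \cite{feigin} itself. Measured against that, your outline of the spanning half is faithful: they do pass to the PBW-graded module $V(\lambda)^a = S(\mathfrak{n}^-)\cdot v_\lambda$, generate the defining ideal from the relations $f_\beta^{\langle\lambda,\beta\rangle+1}$ together with their images under the induced action of $U(\mathfrak{n}^+)$, and run a straightening induction with respect to a total order on monomials, rewriting any $f^{\mathbf{s}}$ that violates a Dyck-path inequality in terms of smaller monomials. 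Identifying the Minkowski-sum property $S(\lambda)+S(\mu)=S(\lambda+\mu)$ as the combinatorial heart of the matter is also correct.

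Where you diverge from \cite{feigin}, and where there is a genuine gap, is the second half. They do not establish $\#S(\lambda)=\dim V(\lambda)$ by a lattice-point count against a branching recursion; they prove linear independence directly, by induction on $\lambda$, realizing $V(\lambda+\lambda_i)$ as the Cartan component of $V(\lambda)\otimes V(\lambda_i)$ and using the decomposition $\mathbf{s}=\mathbf{s}_1+\mathbf{s}_2$ supplied by the Minkowski property to produce a triangular transition matrix against the inductively known basis of the tensor factors. Your alternative rests on two substantial unproved assertions: normality (the integer decomposition property) of the Dyck-path polytope, and the claim that projecting away the coordinates $s_{\alpha_{p,n}}$ fibers $P(\lambda)$ over the corresponding $A_{n-1}$ polytope with fibers whose lattice-point counts reproduce the $\mathfrak{sl}_{n+1}\downarrow\mathfrak{sl}_n$ branching multiplicities. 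The latter is transparent for the Gelfand--Tsetlin (order) polytope but is far from obvious for the chain-type Dyck-path polytope --- the two are related only by a nontrivial transfer map, and Dyck paths terminating at $\alpha_n$ couple the projected coordinates to the retained ones, so the fibers are not visibly the right intervals. As written, the counting step is a plan rather than a proof, and it is not the route taken in \cite{feigin}.
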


We'll state one last lemma before we get to our final result showing that  a regular subalgebra of $\mathfrak{sl}_{n+1}$ must be either narrow or wide.

\begin{lemma}\label{nonzero}
Let $V(\lambda)$ be the simple $\mathfrak{sl}_{n+1}$-module of highest weight $\lambda =m_1\lambda_1+\cdots  +m_n\lambda_n$. 
Fix $i$ such that $1\leq i \leq n$, and such that $m_i>0$. Then, for any $j', j$,  such that $1\leq  j' \leq i \leq j \leq n$, we have
 $f_{\alpha_{i,j}} v_\lambda \in \mathcal{B}_\lambda$, and $f_{\alpha_{j',i}} v_\lambda \in \mathcal{B}_\lambda$.
\end{lemma}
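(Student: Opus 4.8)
The plan is to verify directly that the single monomials $f_{\alpha_{i,j}}v_\lambda$ and $f_{\alpha_{j',i}}v_\lambda$ satisfy the defining inequalities of Theorem \ref{fflv}, i.e.\ that the multi-exponent $\mathbf{s}$ with $s_{\alpha_{i,j}}=1$ (resp.\ $s_{\alpha_{j',i}}=1$) and all other entries zero lies in $S(\lambda)$. Since $\mathbf{s}$ is supported on exactly one root $\alpha_{p,q}$ with coefficient $1$, for any Dyck path $\mathbf{p}=(\beta(0),\dots,\beta(k))$ the left-hand side $s_{\beta(0)}+\cdots+s_{\beta(k)}$ equals $1$ if $\alpha_{p,q}$ is one of the roots appearing in $\mathbf{p}$, and $0$ otherwise. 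So the only inequalities that could fail are those coming from Dyck paths that pass through $\alpha_{p,q}$, and for these the requirement is simply $m_a+\cdots+m_b\ge 1$, where $\beta(0)=\alpha_a$, $\beta(k)=\alpha_b$.

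For the first claim, take $\alpha_{p,q}=\alpha_{i,j}$ with $j\ge i$. A Dyck path through $\alpha_{i,j}$ has $\beta(0)=\alpha_a$ with $a\le i$ (the first index of the roots is nondecreasing along a Dyck path and starts at $a$; since $\beta(s)=\alpha_{p',q'}\mapsto\alpha_{p'+1,q'}$ or $\alpha_{p',q'+1}$, the first index only increases, so reaching $\alpha_{i,\cdot}$ forces $a\le i$) and $\beta(k)=\alpha_b$ with $b\ge j\ge i$ (similarly the second index is nondecreasing, so $b\ge q=j$). Hence $a\le i\le b$, so the sum $m_a+\cdots+m_b$ includes the term $m_i>0$ and is therefore $\ge 1$, and the Dyck inequality holds. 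If $k=0$ the only possibility is $\mathbf p=(\alpha_t)$ and the constraint $s_{\alpha_t}\le m_t$ is automatic unless $\alpha_t=\alpha_{i,j}$, which forces $i=j$ and then requires $1\le m_i$, true by hypothesis. Thus $\mathbf{s}\in S(\lambda)$, so $f_{\alpha_{i,j}}v_\lambda\in\mathcal B_\lambda$. The argument for $\alpha_{j',i}$ with $j'\le i$ is symmetric: a Dyck path through $\alpha_{j',i}$ must start at some $\alpha_a$ with $a\le j'\le i$ and end at some $\alpha_b$ with $b\ge i$, so again $a\le i\le b$ and $m_a+\cdots+m_b\ge m_i>0$.

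The only real point requiring care — and the step I'd expect to be the mildest obstacle — is the monotonicity observation about Dyck paths: along $\mathbf p$, writing $\beta(s)=\alpha_{p(s),q(s)}$, both $p(s)$ and $q(s)$ are nondecreasing in $s$ (immediate from condition (ii)(b)), and $\beta(0)=\alpha_{p(0),p(0)}$, $\beta(k)=\alpha_{q(k),q(k)}$ are simple. So if $\alpha_{p,q}=\beta(s_0)$ for some $s_0$, then $p(0)\le p(s_0)=p$ and $q(k)\ge q(s_0)=q$; combining with $p\le i\le q$ in the first case (resp.\ $p\le i\le q$ with $p=j'$, $q=i$ in the second) gives $\beta(0)=\alpha_a$ with $a\le i$ and $\beta(k)=\alpha_b$ with $b\ge i$. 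Once this is in hand the inequality $m_a+\cdots+m_b\ge m_i>0$ is immediate and the proof is complete.
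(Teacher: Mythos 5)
Your proposal is correct and follows essentially the same route as the paper: reduce to checking the Dyck path inequalities for the single-root multi-exponent, note the left-hand side is $0$ or $1$, and use monotonicity of the indices along a Dyck path to conclude that any path through $\alpha_{i,j}$ (or $\alpha_{j',i}$) starts at $\alpha_a$ with $a\le i$ and ends at $\alpha_b$ with $b\ge i$, so $m_a+\cdots+m_b\ge m_i\ge 1$. The only difference is that you spell out the monotonicity argument that the paper asserts without proof.
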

\begin{proof}
First note that $m_i \geq 1$, since $m_i >0$.
We'll  only show that $f_{\alpha_{i,j}} v_\lambda \in \mathcal{B}_\lambda$, since showing $f_{\alpha_{j',i}} v_\lambda \in \mathcal{B}_\lambda$ is handled in a similar manner.  

Let $\mathbf{s}= (s_\beta)_{\beta \in \Phi^+}$  be the multi-exponent corresponding to
$f_{\alpha_{i,j}} v_\lambda$. In particular,  $s_{\alpha_{i,j}}=1$, and $s_\beta =0$ if $\beta \in \Phi^+ \setminus \{\alpha_{i,j}\}$. And let
 $\mathbf{p}= ( \beta(0), \beta(1), ... , \beta(k))$ be a  Dyck path, with $p(0) =\alpha_l$, and $p(k) = \alpha_{l'}$, where $1\leq l \leq l' \leq n$. We must show that $\mathbf{p}$ satisfies Eq. \eqref{ff1}   of Theorem \ref{fflv}. 
 
 If  $\alpha_{i,j}$ is not a term in  $\mathbf{p}$, then $s_{\beta(0)}+\cdots +s_{\beta(k)} =0$. Hence,  
 Eq. \eqref{ff1} is satisfied.
If the   Dyck path $\mathbf{p}$ includes $\alpha_{i,j}$ as a term, then  $s_{\beta(0)}+\cdots +s_{\beta(k)} =1$. Further, in this case
 then $l \leq i \leq j  \leq l'$. Thus, considering that $m_i \geq 1$, 
$m_l+\cdots+m_{l'}\geq 1$. Hence, Eq. \eqref{ff1}  of Theorem \ref{fflv} is satisfied, as required.
\end{proof}

\begin{theorem}
A  regular subalgebra of $\mathfrak{sl}_{n+1}$ must be either narrow or wide.
\end{theorem}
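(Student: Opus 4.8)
The plan is to show that for a regular subalgebra $\mathfrak{s}_{T,\mathfrak{t}}$ of $\mathfrak{sl}_{n+1}$, the condition ``$[T\cup -T]\cdot\lambda = V(\lambda)$'' is independent of the nonzero dominant weight $\lambda$; combined with Theorem \ref{lwide} (which says $\mathfrak{s}_{T,\mathfrak{t}}$ is $\lambda$-wide iff $[T\cup -T]\cdot\lambda = V(\lambda)$), this forces $\mathfrak{s}_{T,\mathfrak{t}}$ to be either $\lambda$-wide for all $\lambda$ (wide) or $\lambda$-narrow for all $\lambda\neq 0$ (narrow). Write $W = [T\cup -T]$, a symmetric closed subset of $\Phi$. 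The key claim to establish is: \emph{$W\cdot\lambda = V(\lambda)$ if and only if $\alpha_i\in W$ for every simple root $\alpha_i$ with $m_i>0$}, where $\lambda = m_1\lambda_1+\cdots+m_n\lambda_n$. Once this is proved, the dichotomy follows: if $W\neq\Phi$ then (by Corollary \ref{lwideb}) $\mathfrak{s}_{T,\mathfrak{t}}$ is not wide, and we must check that it is in fact narrow, i.e.\ $W\cdot\lambda\subsetneq V(\lambda)$ for \emph{every} $\lambda\neq 0$. If $\alpha_i\notin W$ for some $i$ with $m_i>0$, the claim gives this immediately; if instead $\alpha_i\in W$ for \emph{all} $i$ with $m_i>0$, I will argue separately that $W\cdot\lambda\subsetneq V(\lambda)$ still holds because $W$ being a proper closed subset with all the ``active'' simple roots would actually force $W=\Phi$ on the support — more precisely, one reduces to the Levi component and uses that a symmetric closed subset of a type-$A$ root system containing all simple roots of the relevant connected subdiagram, and closed, which is needed to generate enough root vectors.

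The backward direction of the claim (if each simple $\alpha_i$ with $m_i>0$ lies in $W$, then $W\cdot\lambda = V(\lambda)$) is where the Feigin--Fourier--Littelmann basis $\mathcal{B}_\lambda$ of Theorem \ref{fflv} enters. The idea is to show every basis vector $f^{\mathbf{s}}v_\lambda$ lies in $W\cdot\lambda$. Since $W$ is closed and symmetric, $W\cdot\lambda$ is spanned by products $e_{-\beta_1}\cdots e_{-\beta_m}v_\lambda$ with each $\beta_j\in W\cap\Phi^+$, and $W$ being closed means that if $\beta\in W\cap\Phi^+$ is written $\beta = \alpha_{p,q}$ with, say, $\alpha_p\in W$, then by repeatedly applying closure along a Dyck-type chain we can express the ``descent'' of $f_\beta v_\lambda$ appropriately. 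Concretely: each positive root $\alpha_{p,q}$ appearing in a Dyck path with endpoints $\alpha_i,\alpha_j$ satisfies $i\le p\le q\le j$, so if $m_i>0$ or $m_j>0$ then (by Lemma \ref{nonzero}) the relevant single-factor vectors are nonzero basis elements; I would then show by induction on the height of $\mathbf{s}$ (the total degree $\sum_\beta s_\beta$) that each $f^{\mathbf{s}}v_\lambda$ can be produced by successive applications of root vectors $e_{-\beta}$ with $\beta\in W\cap\Phi^+$, using the $\mathfrak{sl}_2$-relations and the closure of $W$ to ``move'' between $\alpha_{p,q}$ and $\alpha_{p+1,q}$ or $\alpha_{p,q+1}$ — exactly the Dyck-path moves, which correspond to bracketing with $e_{\pm\alpha}$ for simple roots $\alpha\in W$. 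The forward direction (if $W\cdot\lambda=V(\lambda)$ then each simple $\alpha_i$ with $m_i>0$ is in $W$) is the contrapositive argument already used in Corollary \ref{lwideb}: if $\alpha_i\notin W$ but $m_i>0$, then $f_{\alpha_i}v_\lambda\neq 0$ so $\lambda-\alpha_i\in\Pi(V(\lambda))$, while $\lambda-\alpha_i\notin\Pi(W\cdot\lambda)$ by Lemma \ref{wideclosure}.

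The main obstacle I anticipate is the inductive step in the backward direction: translating the combinatorial Dyck-path structure of $\mathcal{B}_\lambda$ into a statement that $f^{\mathbf{s}}v_\lambda\in W\cdot\lambda$ requires carefully tracking which root vectors $e_{-\beta}$ (with $\beta\in W$) are applied and in what order, and ensuring the intermediate vectors are nonzero — the FFLV basis is an ordered-monomial basis, so one must commute factors past each other modulo lower terms, and show those lower terms also lie in $W\cdot\lambda$. A cleaner route, which I would try first, is to bypass the basis entirely for the subalgebra $\mathfrak{s}_{W,\mathfrak{h}}$: since $W$ is symmetric and closed, $\mathfrak{s}_{W,\mathfrak{h}}$ is reductive, and $W\cdot\lambda$ is by construction an $\mathfrak{s}_{W,\mathfrak{h}}$-submodule generated by $v_\lambda$; the question is whether $v_\lambda$ generates all of $V(\lambda)$ under $\mathfrak{s}_{W,\mathfrak{h}}$, which is a statement about the Levi subalgebra $\mathfrak{s}_{W^r,\mathfrak{h}'}$ and is governed purely by which fundamental weights $\lambda$ pairs nontrivially with — here the FFLV basis serves only to make the ``if'' direction concrete and explicit, and Lemma \ref{nonzero} is the crucial input identifying the first-layer basis vectors. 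I expect the write-up to split into: (1) reduce to the claim about $W$ and $\lambda$; (2) forward direction via Lemma \ref{wideclosure}; (3) backward direction via $\mathcal{B}_\lambda$, Lemma \ref{nonzero}, and induction on degree; (4) assemble the dichotomy using Theorem \ref{lwide} and Corollary \ref{lwideb}.
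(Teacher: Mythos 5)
Your key claim --- that $[T\cup-T]\cdot\lambda = V(\lambda)$ if and only if $\alpha_i\in[T\cup-T]$ for every simple root $\alpha_i$ with $m_i>0$ --- is false, and its ``backward'' direction is exactly where your argument breaks. Counterexample: in $\mathfrak{sl}_3$ take $T=\{\alpha_1,-\alpha_1\}$, so $W=[T\cup-T]=\{\pm\alpha_1\}$, and $\lambda=\lambda_1$. Every simple root with $m_i>0$ (namely $\alpha_1$) lies in $W$, yet $W\cdot\lambda_1=\mathrm{Span}\{v_{\lambda_1},\,f_{\alpha_1}v_{\lambda_1}\}$ is $2$-dimensional inside the $3$-dimensional standard module, whose third weight $\lambda_1-\alpha_1-\alpha_2$ cannot be reached by subtracting copies of $\alpha_1$ alone. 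Note also that if your claim were true it would refute the theorem itself: this $\mathfrak{s}_{T,\mathfrak{t}}$ would be $\lambda_1$-wide by Theorem \ref{lwide} but $\lambda_2$-narrow, hence neither narrow nor wide. You sensed the tension --- your outline says that when all active simple roots lie in $W$ you would ``argue separately that $W\cdot\lambda\subsetneq V(\lambda)$ still holds,'' which contradicts the claim you set out to prove --- but the separate argument you sketch (``$W=\Phi$ on the support'') is not developed and cannot work, since $W$ may contain the entire sub-root-system generated by the support of $\lambda$ and still be a proper subset of $\Phi$.

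The missing idea is how the paper handles precisely this case. When $\alpha_i\in W$ with $m_i>0$ but $W\subsetneq\Phi$, one connects the node $i$ to a simple root outside $W$: take $j'<i$ maximal with $\alpha_{j'}\notin W$ (or $j>i$ minimal with $\alpha_j\notin W$). Lemma \ref{nonzero} guarantees $f_{\alpha_{j',i}}v_\lambda\in\mathcal{B}_\lambda$, so $\lambda-\alpha_{j'}-\cdots-\alpha_i\in\Pi(V(\lambda))$; this uses the type-$A$ fact that $\alpha_{j'}+\cdots+\alpha_i$ is a root. Maximality of $j'$ puts $\alpha_{j'+1},\ldots,\alpha_i$ in $W$, so if this weight lay in $\Pi(W\cdot\lambda)$ then Lemma \ref{wideclosure} would force $\alpha_{j'}\in W$, a contradiction; hence $W\cdot\lambda\subsetneq V(\lambda)$ and Theorem \ref{lwide} gives $\lambda$-narrowness. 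Your forward direction (the case $\alpha_i\notin W$, $m_i>0$) coincides with the paper's Case 2 and is fine; it is this bridging of an active node to a missing simple root through a single root $\alpha_{j',i}$, certified by the FFLV basis, that your proposal lacks --- and no induction on $\deg\mathbf{s}$ will supply it, because the statement you would be inducting toward is false.
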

\begin{proof}
Let $\stt$ be a regular subalgebra of $\mathfrak{sl}_{n+1}$. Suppose that $\stt$ is not wide. We must show that $\stt$ is narrow.
That is, given an arbitrary simple $\mathfrak{sl}_{n+1}$-module $V(\lambda)$ with $\lambda =m_1\lambda_1+\cdots  +m_n\lambda_n  \neq 0$,  we'll
show that $V(\lambda)$ has a non-trivial $\stt$-decomposition.
Since $\stt$ is not wide, $[T \cup -T] \subsetneq \Phi$ by Corollary \ref{lwideb}. Hence,
$\Delta \setminus [T \cup -T]$ is not empty.  

Since $\lambda \neq 0$, there exists $i$ such that $m_i >0$.
By Lemma \ref{nonzero}, we have
\begin{equation}
f_{\alpha_{i,j}} v_\lambda \in \mathcal{B}_\lambda, ~\text{and}~ f_{\alpha_{j',i}} v_\lambda \in \mathcal{B}_\lambda.
\end{equation}
for each $j, j'$ such that $1\leq j'\leq i \leq j \leq n$.

We proceed in cases, depending on the relationship between $\alpha_i$ and $\cw$, to show that $V(\lambda)$ has a non-trivial $\stt$-decomposition.

\vspace{2mm}

\noindent Case $1$. $\alpha_i \in \cw$: Then, there exists $j'$ or $j$, with $1\leq j'  <i<  j   \leq n$, such that $\alpha_{j'} \notin \cw$, or $\alpha_{j} \notin \cw$.
Take $j' < i$ maximal such that $\alpha_{j'}  \notin \cw$ (if such a $j'$ exists). And,  take $j > i$ minimal such that $\alpha_{j}  \notin \cw$ (if such a $j$ exists). 

If there exists $j' < i$ maximal such that $\alpha_{j'}  \notin \cw$, then $f_{\alpha_{j',i}} v_\lambda \in \mathcal{B}_\lambda$ (Lemma \ref{nonzero}). 
Thus, 
 $\lambda-\alpha_{j'}-\cdots-\alpha_i \in \Pi(V(\lambda))$. We claim that $\lambda-\alpha_{j'}-\cdots-\alpha_i  \notin \Pi(\cw \cdot \lambda)$. By way 
of contradiction, suppose that $\lambda-\alpha_{j'}-\cdots-\alpha_i \in \Pi(\cw \cdot \lambda)$. 

Then,
$\lambda-\alpha_{j'}-\cdots-\alpha_i  = \lambda -\beta_1-\cdots -\beta_k$ for some  $-\beta_1$,...,$-\beta_k \in \cw \cap \Phi^-$. Hence,
$\alpha_{j'} =\beta_1+\cdots +\beta_k-\alpha_{j'+1}-\cdots-\alpha_{i}  \in \Phi$, which is thus an element of $\cw$ by Lemma \ref{wideclosure} (note that
$\beta_1,..., \beta_k, \alpha_{j'+1},...,\alpha_{i}  \in \cw$), a contradiction. 

Hence, it must be the case that $\lambda-\alpha_{j'}-\cdots-\alpha_i  \notin \Pi(\cw \cdot \lambda)$. Since $\lambda-\alpha_{j'}-\cdots-\alpha_i  \notin \Pi(\cw \cdot \lambda)$ and
$\lambda-\alpha_{j'}-\cdots-\alpha_i  \in \Pi(V(\lambda))$, Theorem \ref{lwide} implies that $V(\lambda)$ has a non-trivial $\stt$-decomposition.

The sub-case in which  there exists  $j > i$ minimal    such that $\alpha_{j}  \notin \cw$ follows in a similar manner to the above sub-case. Hence, we omit this sub-case.

\vspace{2mm}

\noindent Case $2$. $\alpha_i \notin \cw$: In this case, $f_{\alpha_i} v_\lambda \in \mathcal{B}_\lambda$ (Lemma \ref{nonzero}). 
Hence, $\lambda -\alpha_i \in \Pi(V(\lambda))$.  As in the above case, we'll show that 
 $\lambda-\alpha_i \notin \Pi(\cw \cdot \lambda)$, which will imply that $V(\lambda)$ has a non-trivial $\stt$-decomposition (Theorem \ref{lwide}).
 
 By way 
of contradiction, suppose that $\lambda-\alpha_i \in \Pi(\cw \cdot \lambda)$. 
Then 
$\lambda-\alpha_i  = \lambda -\beta_1-\cdots -\beta_k$ for some $-\beta_1$,...,$-\beta_k \in \cw \cap \Phi^-$. Therefore
$\alpha_i =\beta_1+\cdots +\beta_k \in \Phi$, which is thus an element of $\cw$ by Lemma \ref{wideclosure}, a contradiction. Hence, it must be the case that $\lambda-\alpha_i \notin \Pi(\cw \cdot \lambda)$, as required. 

Cases $1$ and $2$ are exhaustive since  $\Delta \setminus [T \cup -T]$ is not empty. Hence, we've established that if $\stt$ is not wide, then $\stt$ is narrow, as required.
\end{proof}

\end{document}